\pdfoutput=1

\documentclass[12pt]{amsart}

\usepackage{color}
\usepackage{graphicx}
\usepackage[mathcal, mathscr]{euscript}
\usepackage{bbm}
\usepackage{float}
\usepackage{amsmath,amsthm,amssymb,amsfonts,amscd,epsfig,latexsym,graphicx,textcomp}
\usepackage{tikz-cd}
\usepackage{psfrag}
\usepackage[all]{xy}
\xyoption{pdf}
\usepackage{stackengine}
\usepackage{comment}
\usepackage{hhline}
\usepackage{diagbox}
\usepackage{float}
\usepackage{caption}
\usepackage{eufrak}
\usepackage{slashed}
\usepackage{amsmath, amssymb, graphics, setspace}

\usepackage{listings}

\usepackage{tikz}
\usepackage{stmaryrd}

\restylefloat{figure}

\usepackage{thmtools}
\usepackage{thm-restate}

\usepackage{hyperref}

\usepackage{cleveref}

\newtheorem{theorem}{Theorem}[section]
\newtheorem{lemma}[theorem]{Lemma}
\newtheorem{corollary}[theorem]{Corollary}

\newtheorem{question}[theorem]{Question}

\theoremstyle{definition}
\newtheorem{definition}[theorem]{Definition} 
\newtheorem{example}[theorem]{Example} 

\theoremstyle{remark}
\newtheorem{remark}[theorem]{Remark}
\numberwithin{equation}{section}

\newcommand{\IIDiag}{\raisebox{-0.33\height}{\includegraphics[scale=0.25]{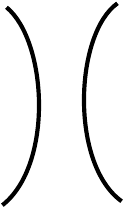}}}

\newcommand{\XDiag}{\raisebox{-0.33\height}{\includegraphics[scale=0.25]{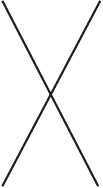}}}
\newcommand{\PMEdgeDiag}{\raisebox{-0.33\height}{\includegraphics[scale=0.25]{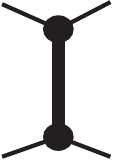}}}

\newcommand{\OneBuckleII}{\raisebox{-0.33\height}{\includegraphics[scale=0.25]{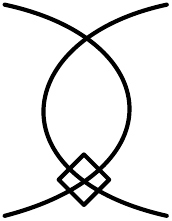}}}
\newcommand{\TwoBuckleII}{\raisebox{-0.33\height}{\includegraphics[scale=0.25]{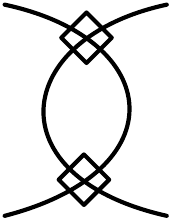}}}
\newcommand{\Buckle}{\raisebox{-0.33\height}{\includegraphics[scale=0.25]{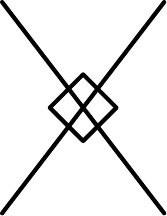}}}
\newcommand{\Node}{\raisebox{-0.33\height}{\includegraphics[scale=0.25]{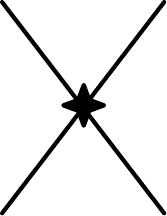}}}
\newcommand{\Virtual}{\raisebox{-0.33 \height}{\includegraphics[scale=0.25]{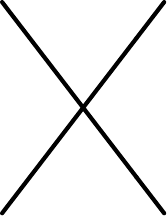}}}
\newcommand{\VirtualTwo}{\raisebox{-0.33 \height}{\includegraphics[scale=0.25]{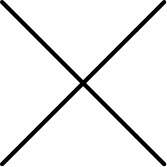}}}
\newcommand{\IINodeL}{\raisebox{-0.33\height}{\includegraphics[scale=0.25]{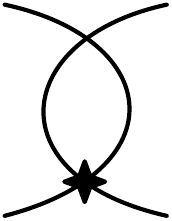}}}
\newcommand{\IINodeU}{\raisebox{-0.33\height}{\includegraphics[scale=0.25]{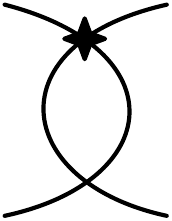}}}
\newcommand{\IINodeLU}{\raisebox{-0.33\height}{\includegraphics[scale=0.25]{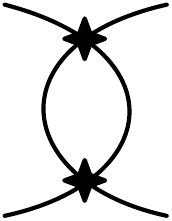}}}

\newcommand{\Pentagon}{\raisebox{-0.4 \height}{\includegraphics[scale=0.25]{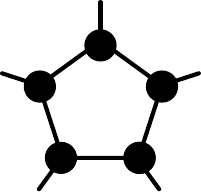}}}
\newcommand{\DRZero}{\raisebox{-0.4 \height}{\includegraphics[scale=0.25]{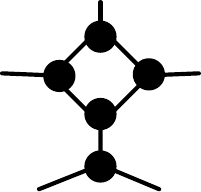}}}
\newcommand{\SMZero}{\raisebox{-0.4 \height}{\includegraphics[scale=0.25]{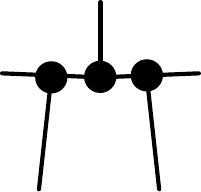}}}
\newcommand{\SMOne}{\raisebox{-0.4 \height}{\includegraphics[scale=0.25]{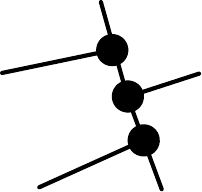}}}
\newcommand{\SMTwo}{\raisebox{-0.4 \height}{\includegraphics[scale=0.25]{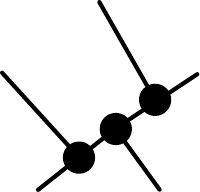}}}
\newcommand{\SMThree}{\raisebox{-0.4 \height}{\includegraphics[scale=0.25]{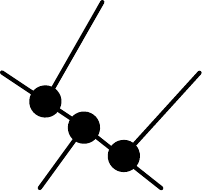}}}
\newcommand{\SMFour}{\raisebox{-0.4 \height}{\includegraphics[scale=0.25]{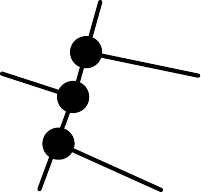}}}

\newcommand{\Quad}{\raisebox{-0.4 \height}{\includegraphics[scale=0.25]{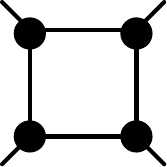}}}
\newcommand{\QuadBD}{\raisebox{-0.4 \height}{\includegraphics[scale=0.25]{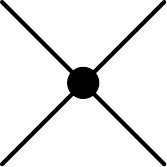}}}
\newcommand{\HH}{\raisebox{-0.4 \height}{\includegraphics[scale=0.25]{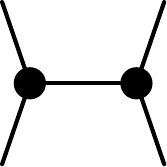}}}
\newcommand{\I}{\raisebox{-0.4 \height}{\includegraphics[scale=0.25]{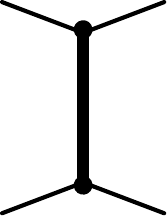}}}
\newcommand{\PlainI}{\raisebox{-0.4 \height}{\includegraphics[scale=0.25]{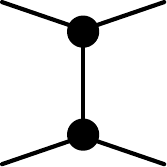}}}

\newcommand{\EqualTwo}{\raisebox{-0.4 \height}{\includegraphics[scale=0.25]{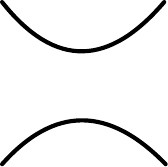}}}
\newcommand{\II}{\raisebox{-0.4 \height}{\includegraphics[scale=0.25]{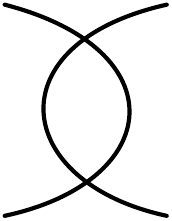}}}
\newcommand{\IITwo}{\raisebox{-0.4 \height}{\includegraphics[scale=0.25]{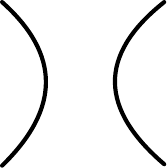}}}

\newcommand{\FourVRA}{\raisebox{-0.33\height}{\includegraphics[scale=0.5]{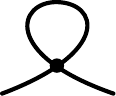}}}
\newcommand{\IDot}{\raisebox{-0.33\height}{\includegraphics[scale=0.5]{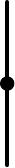}}}
\newcommand{\FourVRB}{\raisebox{-0.33\height}{\includegraphics[scale=0.5]{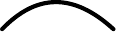}}}
\newcommand{\Bubble}{\raisebox{-0.4 \height}{\includegraphics[scale=0.5]{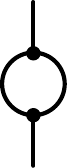}}}
\newcommand{\Vertical}{\raisebox{-0.4 \height}{\includegraphics[scale=0.5]{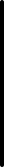}}}

\newcommand{\BUTriangle}{\raisebox{-0.4 \height}{\includegraphics[scale=0.5]{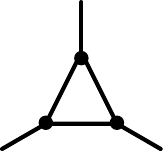}}}
\newcommand{\TrivalentVert}{\raisebox{-0.4 \height}{\includegraphics[scale=0.5]{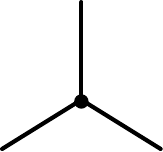}}}

\newcommand{\TrivalentVertTwist}{\raisebox{-0.4 \height}{\includegraphics[scale=0.5]{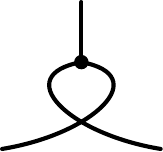}}}
\newcommand{\VertexZero}{\raisebox{-0.4 \height}{\includegraphics[scale=0.5]{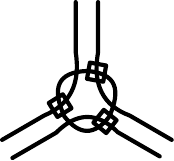}}}
\newcommand{\VertexOne}{\raisebox{-0.4 \height}{\includegraphics[scale=0.5]{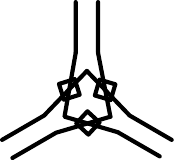}}}

\usepackage{xpatch}
\makeatletter   
\xpatchcmd{\@tocline}
{\hfil\hbox to\@pnumwidth{\@tocpagenum{#7}}\par}
{\ifnum#1<0\hfill\else\dotfill\fi\hbox to\@pnumwidth{\@tocpagenum{#7}}\par}
{}{}
\makeatother    

\makeatletter
 \def\l@subsection{\@tocline{2}{0pt}{4pc}{6pc}{}}
\def\l@subsubsection{\@tocline{3}{0pt}{8pc}{8pc}{}}
 \makeatother

\begin{document}

\title{New relations for the Penrose polynomial}

\thanks{}

\author{Scott Baldridge}
\address{Department of Mathematics, Louisiana State University,
Baton Rouge, LA}
\email{baldridge@math.lsu.edu}

\author{Ben McCarty}
\address{Department of Mathematical Sciences, University of Memphis,
Memphis, TN}
\email{ben.mccarty@memphis.edu}

\subjclass{}
\date{}

\begin{abstract} 
We introduce two new relations involving the pentagon and the quadrilateral for the evaluation of the Penrose polynomial at $n=4$ that is proven using a new type of ribbon graph polynomial.  Additionally, we extend several relations for the evaluation of the Penrose polynomial at $n=3$ to all $n$.  
\end{abstract}

\maketitle

\section{Introduction}
In 1971, Penrose~\cite{Penrose} introduced a diagrammatic method for studying abstract tensor systems. As a result, he derived several formulas for computing the number of $3$-edge colorings of a planar trivalent graph. The most notable of these formulas gave rise to what is now known as the Penrose polynomial. This polynomial can be computed from a \emph{perfect matching graph} $\Gamma_M$, which is a ribbon graph $\Gamma$ associated with a trivalent graph $G(V,E)$ together with a perfect matching $M \subset E$. By a ribbon graph, we mean the closure of a small neighborhood of the $1$-skeleton of a CW-complex of a closed surface $\overline{\Gamma}$, together with the $1$-skeleton (see~\cite{BM-Color,BKR} for a detailed treatment). The Penrose polynomial of $\Gamma_M$ is defined using the bracket on the diagram of it,
\[
\left[ \PMEdgeDiag \right]_n = \left[ \IIDiag \right]_n - \left[ \XDiag \right]_n,
\]
\[
\left[ \bigcirc \right]_n = n,
\]
where the bold edge in the first equation denotes an edge in the perfect matching (cf.~\cite{Aigner,BM-Color,BM-Reduce,BKM-TFC,EMM}). We refer to $n$ above as the \emph{loop value}.

While the Penrose polynomial generally depends on both the ribbon graph and the perfect matching, Penrose~\cite{Penrose} showed that its evaluation at $n=3$ is independent of the perfect matching. In fact, Penrose showed that its evaluation is always the number of $3$-edge colorings of the underlying  graph when it  is planar. He then described several relations that hold when the loop value is $n=3$ (cf. \Cref{fig:Penrose-Orig-Relations}).
\begin{figure}[H]
  \centering
  \includegraphics[scale=0.44]{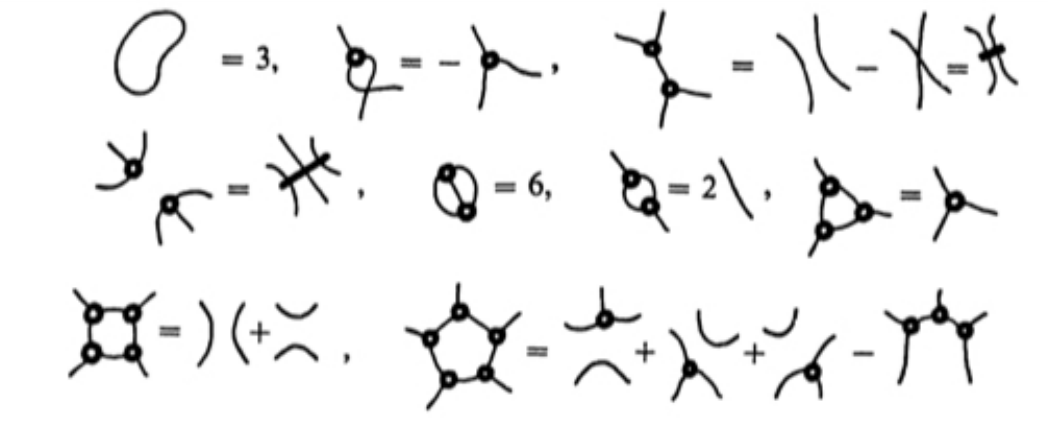}
  \caption{Relations from Penrose's original paper~\cite{Penrose}. In each case, the relation involves the bracket evaluated at $n=3$.}
  \label{fig:Penrose-Orig-Relations}
\end{figure}

To remove the dependency on the perfect matching and obtain a polynomial invariant of the ribbon graph, and to extend the construction beyond  trivalent graphs, we \emph{blowup} every vertex of the original ribbon graph $\Gamma$, in which a vertex of degree $d$ is replaced by a $d$-cycle. For example, when $d=3$, the vertex is replaced by a $3$-cycle, as shown below.
\begin{center}
  \includegraphics[scale=0.05]{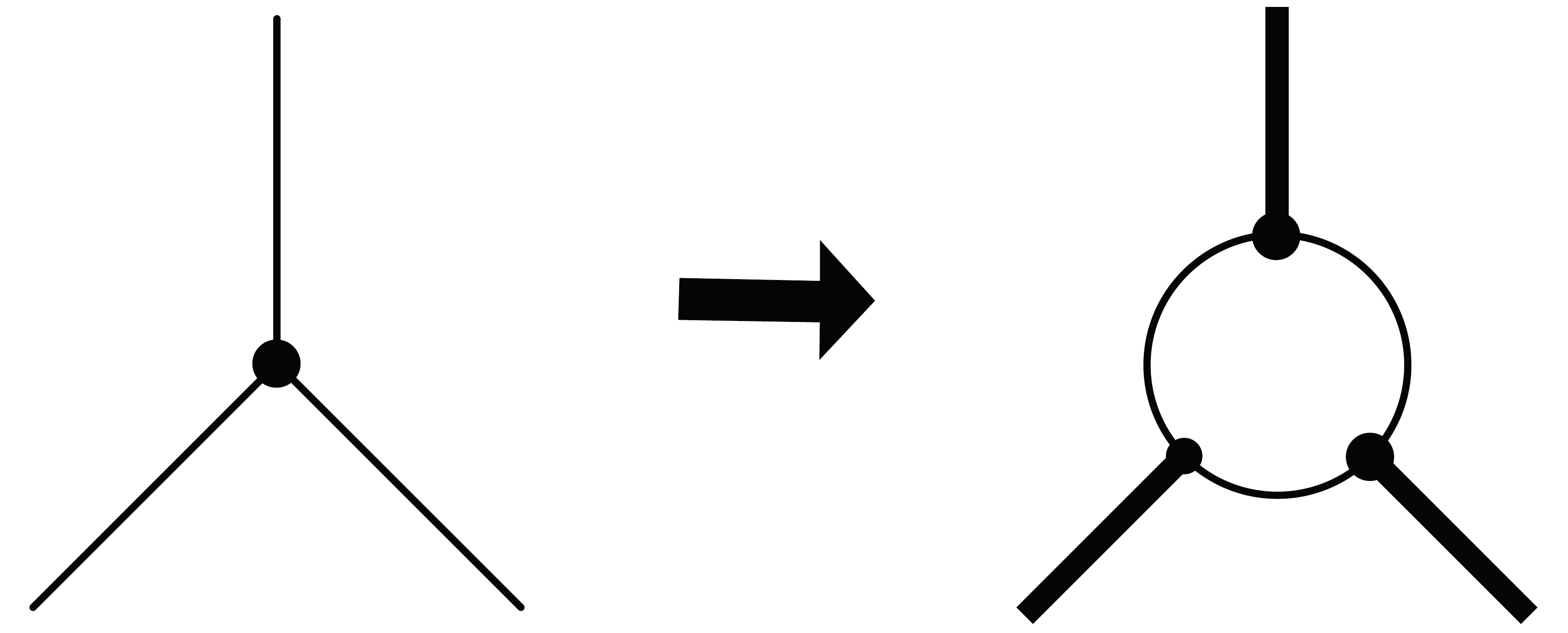}
\end{center}
The resulting blowup graph is trivalent, even when the original ribbon graph is not, and it has a canonical perfect matching given by the edge set of the original graph. We denote the resulting perfect matching graph by $\Gamma_E^\flat$. The Penrose polynomial of the ribbon graph $\Gamma$ is defined as the Penrose polynomial of $\Gamma_E^\flat$.

Working with the blowup, one can show that some, but not all, of Penrose's relations (cf. \Cref{fig:Penrose-Orig-Relations}) hold for the blowup and can be extended to relations that hold for all $n$ (cf.~\Cref{thm:polynomialRelations}). Others, such as those for the pentagon and quadrilateral, are significantly more challenging to extend. For 55 years, no such relations were known for the pentagon or quadrilateral  for a loop value of $n>3$. In this paper, we employ a technique inspired by Kauffman's multi-virtuals and the chromatic polynomial (cf.~\cite{BKM-TFC,EMM,KPrep}) to reduce the calculation of an infinite number of graphs down to a finite number, as described in \Cref{thm:capRelation}. This corollary is the key theoretical input for this paper. It  is used to prove the following relations of the Penrose polynomial in \Cref{thm:4gonRelation,thm:5gonRelation} for $n=4$:

\medskip

\[
\left[ \Quad \right]_4 = 2 \left[ \HH \right]_4 + 2 \left[ \PlainI \right]_4 + 4 \left[ \IITwo \right]_4 + 4 \left[ \EqualTwo \right]_4 + 4 \left[ \VirtualTwo \right]_4 - 4 \left[ \QuadBD \right]_4,
\]

\medskip

\[
\left[ \Pentagon \right]_4 = \frac{3}{2} \left[ \DRZero \right]_4 - 4 \left[ \SMZero \right]_4 - \left[ \SMOne \right]_4 + 2 \left[ \SMTwo \right]_4 + 2 \left[ \SMThree \right]_4 - \left[ \SMFour \right]_4.
\]

\medskip
\noindent In this paper, the outer edges emanating from the quadrilateral and pentagon are always assumed to be unique in the ribbon graph that contains them.

The relation for the pentagon suggests a new way to prove  the Four Color Theorem.  This is especially striking because Kempe's original strategy failed at the point where one encounters a pentagonal configuration.  The pentagon relation above raises the possibility of reviving that idea: if one could show that the left-hand side is necessarily positive when evaluated at $n=4$, then by \cite{BM-Color} (see also \cite{Aigner}) the Penrose polynomial would be positive at $4$, and hence the graph would admit a four-face-coloring.

We have aimed to introduce terms as they arise, but readers looking for a more comprehensive treatment should consult \cite{BaldCohomology,BKR,BLM,BM-Color,BM-Reduce,BKM-TFC}.   In \Cref{sec:polys} we define the polynomials used.  \Cref{sec:caps} describes the needed framework for proving the new relations, and \Cref{sec:Relations} describes the proof itself.

\section{The Polynomials}\label{sec:polys}

First, we formalize the definition of the Penrose polynomial from \cite{BM-Color} based on the discussion above (see also \cite{BaldCohomology}).  Unless otherwise noted, all graphs $G$ are connected and trivalent.

\begin{definition}\label{defn:penrose_poly}
Let $\Gamma_M$ be a perfect matching graph for the pair $(G,M)$.  Then the {\em Penrose polynomial}, denoted $\left[\Gamma_M\right]_n$,  is found by recursively applying the bracket $$\left[ \I \right]_{\! n} = \left[ \IIDiag  \right]_{\! n}  - \left[ \XDiag \right]_{\! n}$$ to perfect matching edges of $\Gamma_M$ and setting the value of immersed loops to $\left[ \bigcirc \right]_{\! n} = n$.  For $\Gamma$ a  ribbon graph of $G$, define $[\Gamma]_n:=[\Gamma^\flat_E]_n$.
\end{definition}

The recursive relation in~\Cref{defn:penrose_poly} naturally leads to a \emph{hypercube of states}, defined as follows. Given a trivalent graph $G(V,E)$ with a perfect matching $M$, the number of vertices $|V|$ is even, and the number of edges in $M$ is $\ell = |V|/2$. Label and order these edges as $M = \{e_1, e_2, \dots, e_\ell\}$. Let $\Gamma_M$ denote a perfect matching graph associated with $(G,M)$, presented as a diagram in the plane. For each perfect matching edge $e_i \in M$, resolve it in one of two ways according to the smoothings: replace a neighborhood of $e_i$ in $\Gamma_M$ with $\IIDiag$, called a {\em $0$-smoothing}, or $\XDiag$, called a {\em $1$-smoothing}. The resulting set of immersed circles in the plane is called a \emph{state} of $\Gamma_M$. By construction, each state can be regarded as a ribbon graph, where a $0$-smoothing corresponds to an untwisted ribbon and a $1$-smoothing to a twisted ribbon. We refer to this ribbon graph as a \emph{state graph}.

There are $2^\ell$ states of $\Gamma_M$, each indexed by an $\ell$-tuple $\alpha = (\alpha_1, \dots, \alpha_\ell) \in \{0,1\}^\ell$, where $\alpha_i$ indicates the type of smoothing for edge $e_i$. For a state $\Gamma_\alpha$, the edge $e_i$ is resolved by an $\alpha_i$-smoothing. Define $|\alpha| = \sum_{i=1}^\ell \alpha_i$, and organize the states into columns based on the value of $|\alpha|$ (see~\Cref{fig:P3Hypercube}).
\begin{figure}[H]
  \centering
  \includegraphics[scale=0.75]{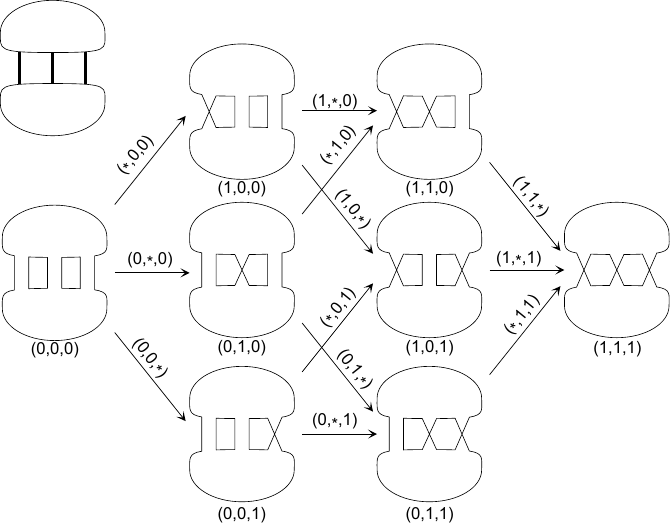}
  \caption{A hypercube of states.}
  \label{fig:P3Hypercube}
\end{figure}

\subsection{A new extension of the Penrose polynomial}
Theorems~E and~7.2 in~\cite{BM-Color} establish that the evaluation of the Penrose polynomial at $n$ is determined by a signed count of proper colorings of state graphs. Regarding each state graph as a ribbon graph in its own right, this implies that the Penrose polynomial is a signed sum of the chromatic polynomials of the duals of these state graphs. We now introduce a bracket inspired by this property.

\begin{definition}\label{defn:BMBracket}
Let $\Gamma_M$ be a perfect matching graph for the pair $(G,M)$.  Then the {\em BM (chromatic) polynomial}, denoted $\llbracket\Gamma_M \rrbracket_n$,  is computed by recursively applying the bracket
$$\bigg \llbracket \I \bigg \rrbracket_{\! n} = \bigg \llbracket \OneBuckleII  \bigg \rrbracket_{\! n}  - \bigg \llbracket \Buckle\bigg \rrbracket_{\! n}$$
$$\bigg \llbracket \Buckle \bigg \rrbracket_{\! n} = \bigg \llbracket \Virtual  \bigg \rrbracket_{\! n}  - \bigg \llbracket \Node \bigg \rrbracket_{\! n}$$
 to perfect matching edges of $\Gamma_M$ and setting the value of immersed loops to $ \llbracket \bigcirc  \rrbracket_{\! n} = n$.  Here, the node, \Node means that the two arcs are treated as a single circle, and referred to as \emph{fused}. 
\end{definition}

\begin{remark}
The squares (or diamonds) in~\Cref{defn:BMBracket} were previously introduced in~\cite{BM-Reduce} as \emph{buckles} and \emph{clasps}. In that paper, the term ``buckle'' referred only to interactions at a $1$-smoothing site. For simplicity, we use the term ``buckle'' for both in this paper, but the meaning of the symbol is the same as in the earlier paper.
\end{remark}

Applying the first relation of \Cref{defn:BMBracket} produces the same hypercube of states as that of the Penrose polynomial---each state of which is a set of immersed circles, except for the additional squares decorating the virtual crossings at the smoothing sites.  Call this the hypercube of states for the BM polynomial. When referring to a state graph, we let the context determine whether the smoothing sites are decorated with squares, as in the computation of the BM polynomial, or not, as in the computation of the Penrose polynomial. 
 
Note that this bracket is different than the Penrose-Kauffman bracket defined in \cite{BKM-TFC}, which was used to compute the total face color polynomial.  The PK bracket computes the Poincar\'e polynomial of a certain homology theory (cf. \cite{BM-Color}) whereas the bracket for the BM polynomial computes the Euler characteristic of that homology (see \Cref{thm:EAnalog}).  In fact, one can show that if you replace each circle in one of the states above by a vertex and each buckle by an edge between the vertices representing those circles, the bracket for the BM polynomial computes the chromatic polynomial of that abstract graph (see \Cref{lem:BMState}).
 
Our first observation is that, when applied to the blowup of a ribbon graph with its canonical perfect matching, the BM polynomial equals the Penrose polynomial, which is also the Euler characteristic of the homology in \cite{BM-Color}. The squares, when resolved using the second relation in \Cref{defn:BMBracket},  give the following:
\begin{align*}
\bigg \llbracket \hspace{.1cm} \I \hspace{.1cm} \bigg \rrbracket_{\! n} &=  \bigg \llbracket  \hspace{.1cm} \OneBuckleII  \hspace{.1cm} \bigg \rrbracket _{\! n} - \bigg \llbracket  \hspace{.1cm} \Buckle  \hspace{.1cm} \bigg \rrbracket _{\! n}\\
&= \bigg \llbracket  \hspace{.1cm} \II  \hspace{.1cm} \bigg \rrbracket _{\! n} - \bigg \llbracket  \hspace{.1cm} \IINodeL  \hspace{.1cm} \bigg \rrbracket _{\! n}- \bigg \llbracket  \hspace{.1cm} \Virtual  \hspace{.1cm} \bigg \rrbracket _{\! n} + \bigg \llbracket  \hspace{.1cm} \Node  \hspace{.1cm} \bigg \rrbracket _{\! n}\\
&=  \bigg \llbracket  \hspace{.1cm} \II  \hspace{.1cm} \bigg \rrbracket _{\! n} - \bigg \llbracket  \hspace{.1cm} \Virtual  \hspace{.1cm} \bigg \rrbracket _{\! n}\\
&=  \left[  \hspace{.1cm} \I  \hspace{.1cm} \right]_{\! n}. 
\end{align*}
The two terms with nodes in the second line are  treated as the same ``fused'' circle, and therefore cancel. Thus, we have proven the following theorems.

\begin{theorem}\label{thm:BracketRelation}
Let $\Gamma$ be a ribbon graph for a trivalent graph $G(V,E)$.  Then,
$$\llbracket \Gamma^\flat_E  \rrbracket_n = \left[\Gamma^\flat_E \right]_n,$$
and hence $\llbracket \Gamma \rrbracket_n = \left[\Gamma\right]_n$.
\end{theorem}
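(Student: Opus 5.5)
The plan is to compare the two brackets one perfect matching edge at a time, using the local computation displayed just before the statement. Both $\llbracket\cdot\rrbracket_n$ and $[\cdot]_n$ are computed by resolving every edge of $M$ and then evaluating closed loops. The idea is to show that after every square has been resolved by the second relation of \Cref{defn:BMBracket}, the BM state sum collapses term by term onto the Penrose state sum. I will work on the full hypercube rather than with a purely local identity, since the node terms are not locally closed diagrams.

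First, I expand $\llbracket\Gamma_M\rrbracket_n$ fully. Each matching edge $e_i$ contributes one of four local pictures:
\begin{itemize}
\item the decorated $0$-smoothing resolved to \II\ with sign $+$;
\item the decorated $0$-smoothing resolved to \IINodeL\ with sign $-$;
\item the $1$-smoothing resolved to \Virtual\ with sign $-$;
\item the $1$-smoothing resolved to \Node\ with sign $+$.
\end{itemize}
So the BM polynomial is a sum over $\{0,1,2,3\}^\ell$ of signed terms $n^{c}$, where $c$ is the number of circles after identifying fused arcs. The key observation is that a node at $e_i$ fuses the two arcs passing through the site into one component. Whether those arcs are crossed virtually or left parallel then has no effect on the partition of arcs into fused circles. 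Hence the two node pictures at $e_i$ give identical global states, with all other choices held fixed, and their signs are opposite.

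I then define a sign-reversing involution on the states: choose the lowest-index edge carrying a node and toggle that node between the \IINodeL\ form and the \Node\ form. This pairs states with equal circle count and opposite sign, so every state containing at least one node cancels. The surviving states use only \II\ (sign $+$) or \Virtual\ (sign $-$) at each edge. These are exactly the Penrose states $\Gamma_\alpha$ with sign $(-1)^{|\alpha|}$, because an undecorated virtual crossing is the $1$-smoothing \XDiag\ of \Cref{defn:penrose_poly}. Their circle counts agree, so $\llbracket\Gamma_M\rrbracket_n=[\Gamma_M]_n$ for any perfect matching graph, and in particular for $\Gamma^\flat_E$. The second assertion then follows from \Cref{defn:penrose_poly}, reading $\llbracket\Gamma\rrbracket_n$ as $\llbracket\Gamma^\flat_E\rrbracket_n$.

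The main obstacle is the claim that the two node pictures yield the same fused circle structure. It has to be checked that ``fused'' is well defined globally: the circle count is the number of connected components of the union of state arcs, with arcs identified at nodes. It must also be checked that this count is independent of how the arcs are routed through the fused site. I would verify this by viewing each state as a graph: each node is a vertex joining the four arc-ends at the site, and the two local routings differ only by a permutation of the ends at that vertex, which leaves the connected components unchanged.
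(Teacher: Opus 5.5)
Your proposal is correct and follows the paper's own argument. The paper expands each matching edge into the same four signed pictures and cancels the two node terms because they represent the same fused circle with opposite signs, leaving exactly the Penrose bracket. Your sign-reversing involution on the full $4^\ell$-term expansion, together with the check that fusion makes the circle count independent of the local routing, is simply a globally justified version of that local cancellation.
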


\begin{theorem}
Given $\Gamma_M$ a perfect matching graph for the pair $(G,M)$ without any buckles, then the Penrose polynomial and the BM polynomial are equal.  That is, 
$$\llbracket \Gamma_M \rrbracket_{\! n} = \left[ \Gamma_M \right]_{\! n}.$$
\end{theorem}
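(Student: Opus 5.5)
We prove the statement by a local computation at each perfect matching edge, followed by induction on the number of unresolved perfect matching edges. This is the same computation displayed just before \Cref{thm:BracketRelation}. That theorem is the special case $\Gamma_M=\Gamma^\flat_E$; here we only assume that the diagram of $\Gamma_M$ carries no pre-existing buckles.

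Fix an ordering $e_1,\dots,e_\ell$ of $M$. Apply the first relation of \Cref{defn:BMBracket} at $e_1$, and then resolve each newly created square with the second relation. This gives four terms: the $0$-smoothing, the $0$-smoothing with its two arcs fused, the virtual crossing, and the virtual crossing with its two arcs fused. The main step is to check that the two fused terms are the same diagram, since in both the two local arcs are declared to be a single circle. They appear with opposite signs, so they cancel. What remains is
\[
\llbracket \Gamma_M \rrbracket_n=\llbracket \Gamma_M^{e_1,0}\rrbracket_n-\llbracket \Gamma_M^{e_1,1}\rrbracket_n ,
\]
where $\Gamma_M^{e_1,i}$ denotes the diagram with $e_1$ replaced by its undecorated $i$-smoothing. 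This is exactly the Penrose bracket relation of \Cref{defn:penrose_poly} at $e_1$.

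The key point is that this cancellation is local and does not depend on the rest of the diagram. This is where the no-buckles hypothesis is used. Each square that arises is created and then immediately resolved at its own site. It therefore never interacts with another square, and no term outside the one canceling pair is ever produced. By contrast, a pre-existing buckle elsewhere would change how circles are counted; under \Cref{lem:BMState} it contributes a chromatic edge constraint that the Penrose bracket does not see. The resulting diagrams $\Gamma_M^{e_1,i}$ again contain no buckles and have one fewer unresolved perfect matching edge. So we can iterate, or formally induct on the number of unresolved edges of $M$.

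Once every edge of $M$ is resolved, each term is a buckle-free collection of immersed circles $\Gamma_\alpha$ carrying the sign $(-1)^{|\alpha|}$. Both brackets assign the value $n^{\#\text{circles}}$, since no fused circles survive. Summing over the hypercube of states gives $\llbracket \Gamma_M\rrbracket_n=\sum_\alpha(-1)^{|\alpha|}n^{|\Gamma_\alpha|}=[\Gamma_M]_n$.

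The main obstacle is justifying that the two fused diagrams really are identical as inputs to the bracket. One needs that fusing depends only on which arcs are identified, and not on whether the local picture was drawn as parallel arcs or as a virtual crossing. I would argue this by noting that, once the two arcs are declared to be a single circle, the loop count and all further resolutions depend only on the resulting identification of arcs into circles. That identification is the same in both diagrams.
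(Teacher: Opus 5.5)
Your proposal is correct and is essentially the paper's own argument. The paper proves this statement, together with \Cref{thm:BracketRelation}, by the same local computation: it expands both squares and cancels the two fused terms to recover the Penrose relation at each perfect matching edge, and your induction over the edges of $M$ only makes that iteration explicit. One small refinement is that the no-buckles hypothesis is not needed for the local cancellation, which holds in any diagram. It is needed only at the end, where buckle-free states evaluate to $n^{|\Gamma_\alpha|}$, and your final paragraph uses it correctly there.
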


One might question the sanity of introducing a new bracket only to say that it yields the same result as the Penrose bracket, especially when it involves more computations.  However, the second bracket of the BM polynomial in \Cref{defn:BMBracket}, which we call the \emph{buckle bracket}, allows us to derive a BM polynomial of {\em each} state graph. 

Fortunately, in such calculations, not all buckle brackets need to be expanded. Observe that whenever two or more buckles appear on two arcs, all but one can be eliminated:
\begin{align*}
\bigg \llbracket \hspace{.1cm} \TwoBuckleII \hspace{.1cm} \bigg \rrbracket_{\! n} &=  \bigg \llbracket  \hspace{.1cm} \II  \hspace{.1cm} \bigg \rrbracket _{\! n} - \bigg \llbracket  \hspace{.1cm} \IINodeL  \hspace{.1cm} \bigg \rrbracket _{\! n}- \bigg \llbracket  \hspace{.1cm} \IINodeU  \hspace{.1cm} \bigg \rrbracket _{\! n}+ \bigg \llbracket  \hspace{.1cm} \IINodeLU  \hspace{.1cm} \bigg \rrbracket _{\! n}\\
&=  \bigg \llbracket  \hspace{.1cm} \II  \hspace{.1cm} \bigg \rrbracket _{\! n} - \bigg \llbracket  \hspace{.1cm} \IINodeL  \hspace{.1cm} \bigg \rrbracket _{\! n}\\
&=  \bigg \llbracket  \hspace{.1cm} \OneBuckleII  \hspace{.1cm} \bigg \rrbracket_{\! n}. 
\end{align*}
Thus, we obtain the following.

\begin{lemma}
Let $\Gamma_M$ be a perfect matching ribbon graph for a trivalent graph $G(V,E)$ with perfect matching $M\subset E$, and let $\Gamma_\alpha$ be a state graph for the BM polynomial.  For each pair of circles in $\Gamma_\alpha$ that interact in some number of buckles, all but one of the buckles may be removed when applying the buckle relation.
\end{lemma}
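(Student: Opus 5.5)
The plan is to reduce the lemma to the local two-buckle computation displayed just before the statement, and then induct on the number of buckles between a fixed pair of circles. Fix a state graph $\Gamma_\alpha$ and two circles $C_1, C_2$ that meet in $k \ge 2$ buckles. Since the buckle bracket of \Cref{defn:BMBracket} is applied locally and independently at each square, we may expand any chosen subset of buckles first. So we isolate two of the buckles between $C_1$ and $C_2$, leave everything else in the diagram untouched, and treat the rest as a fixed context.

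Expand these two buckles as in the displayed computation. This gives four terms: no nodes, a node at the lower buckle, a node at the upper buckle, and nodes at both. Three observations finish the local step.

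\begin{itemize}
\item A node means only that $C_1$ and $C_2$ are treated as a single fused circle, so the fusion relation does not depend on where the node sits. The two single-node terms therefore describe the same fused configuration.
\item The double-node term fuses $C_1$ and $C_2$ twice, which is the same as fusing them once. It is therefore also equal to the single-node configuration.
\item Counting signs, the two single-node terms carry $-1$ and the double-node term carries $+1$, so together they contribute one copy of the fused term with coefficient $-1$.
\end{itemize}

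The result is exactly the expansion of a single buckle between $C_1$ and $C_2$, so two buckles may be replaced by one. Iterating this, removing one buckle at a time, reduces $k$ buckles to one.

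The main point needing care is that this identity holds inside the full state graph and not only in the local picture. The rest of the diagram, including other buckles involving $C_1$ or $C_2$ with third circles, is identical in every term. Fusion is an equivalence relation on circles, so the final loop count after all nodes are resolved depends only on which circles end up identified, not on which buckle produced the identification. I would make this precise by viewing a fully expanded term as a choice of a subset $S$ of the buckles that are fused. Its value is $(-1)^{|S|} n^{c(S)}$, where $c(S)$ is the number of components of the graph whose vertices are the circles and whose edges are the buckles in $S$. This is the chromatic-polynomial viewpoint mentioned before \Cref{lem:BMState}. In that formula, parallel edges between the same pair of vertices can be collapsed: for a fixed choice on the other edges, the alternating sum over the nonempty subsets of the $k$ parallel edges equals $-1$, which is the same as what a single edge contributes. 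So multiple buckles reduce to one.
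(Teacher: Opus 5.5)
Your proposal is correct and is essentially the paper's argument. The paper's proof is exactly the displayed expansion of two buckles into four terms: the two single-node terms and the double-node term all represent the same fused circle, so they combine as $-A-A+A=-A$, and what remains is the expansion of a single buckle. Your iteration over the number of buckles, together with the subset expansion $\sum_S (-1)^{|S|} n^{c(S)}$ (where the nonempty subsets of $k$ parallel buckles contribute $(1-1)^k-1=-1$, the same as one buckle), makes explicit two points the paper leaves implicit: the reduction from $k$ buckles to one, and that the local identity does not depend on the rest of the diagram.
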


Unlike the Penrose bracket, which for a state simply replaces each circle with a factor of $n$, the BM polynomial of the state counts proper $n$-face colorings of the ribbon graph associated to the state, as demonstrated by the following lemma.

\begin{lemma}\label{lem:BMState}
Let $\Gamma$ be a ribbon graph for a trivalent graph $G(V,E)$, and let $\Gamma_\alpha$ be a state graph for the BM polynomial of the blowup $\Gamma_E^\flat$.  Then the BM polynomial of $\Gamma_\alpha$ is the number of proper $n$-face colorings of the closed surface $\overline \Gamma_\alpha$, or, using the harmonic colorings of the state from \cite{BM-Color},
$$\llbracket \Gamma_\alpha \rrbracket_{\! n} = \dim \widehat{\mathcal{CH}}_n(\Gamma_\alpha).$$
In particular, if a buckle is on any single circle in a state graph $\Gamma_\alpha$, then $\llbracket \Gamma_\alpha \rrbracket_{\! n} = 0$.
\end{lemma}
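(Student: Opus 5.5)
The plan is to identify the buckle-bracket expansion of a fixed state $\Gamma_\alpha$ with the deletion--contraction expansion of a chromatic polynomial. First form an abstract graph $H_\alpha$:
\begin{itemize}
\item its vertices are the circles of $\Gamma_\alpha$;
\item for each buckle joining two arcs, add an edge between the circles carrying those arcs;
\item a buckle whose two arcs lie on the same circle gives a loop.
\end{itemize}
The buckle bracket $\llbracket \Buckle \rrbracket_n = \llbracket \Virtual \rrbracket_n - \llbracket \Node \rrbracket_n$ then reads as follows. Removing the square corresponds to deleting the edge, and fusing the two arcs into one circle corresponds to contracting it. Each surviving circle contributes $n$. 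By induction on the number of buckles, $\llbracket \Gamma_\alpha \rrbracket_n$ therefore satisfies $P(H) = P(H\setminus e) - P(H/e)$ with $P(\text{edgeless graph on } k \text{ vertices}) = n^k$. These are exactly the recursion and initial values of the chromatic polynomial, so $\llbracket \Gamma_\alpha\rrbracket_n = \chi_{H_\alpha}(n)$. The one point to check carefully is that the fusion bookkeeping is consistent: a node merges the two circles into one ``fused'' circle, and any other buckles between those circles become loops or parallel edges. This matches graph contraction, and the lemma on removing duplicate buckles is consistent with parallel edges not affecting $\chi$.

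Next I would identify $H_\alpha$ with the dual of the surface $\overline\Gamma_\alpha$. The state graph $\Gamma_\alpha$, viewed as a ribbon graph obtained from the blowup $\Gamma_E^\flat$, has boundary components, i.e.\ faces of $\overline\Gamma_\alpha$, which are exactly the circles of the state. Each perfect-matching edge of $\Gamma^\flat_E$ is now a (twisted or untwisted) ribbon, and its square records that the two faces on either side of that ribbon meet along it. So two faces are adjacent in $\overline\Gamma_\alpha$ precisely when an edge of $H_\alpha$ joins them. Here I should confirm that the cycle edges of the blowup contribute no additional adjacencies. I expect this to be the main obstacle, since it depends on how faces of the state ribbon graph are set up in \cite{BM-Color}. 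My first attempt would be to argue that blowup-cycle edges border a face and a circle arc already accounted for by the same circles. Failing that, I would directly invoke the identification in \cite{BM-Color} of the states with the face structure of $\overline\Gamma_\alpha$.

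Granting this, a proper $n$-face coloring of $\overline\Gamma_\alpha$ is the same as a proper $n$-vertex coloring of $H_\alpha$. Their number is $\chi_{H_\alpha}(n) = \llbracket\Gamma_\alpha\rrbracket_n$. The equality with $\dim \widehat{\mathcal{CH}}_n(\Gamma_\alpha)$ follows from the description in \cite{BM-Color} (Theorems E and 7.2) of harmonic colorings of a state as exactly such proper face colorings, since this space has a basis indexed by them.

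Finally, if a buckle lies on a single circle, then $H_\alpha$ has a loop, and $\chi_{H_\alpha}=0$. This can also be seen directly from the bracket. By the duplicate-buckle lemma, we may assume there is one such buckle. Its expansion gives the virtual term minus the fused term. Fusing a circle with itself leaves the same circle, so the two terms are equal and cancel. Hence $\llbracket\Gamma_\alpha\rrbracket_n=0$.
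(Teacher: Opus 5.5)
Your proposal is correct and follows essentially the same route as the paper. The paper regards $\Gamma_\alpha$ as a ribbon graph whose faces are the state circles, reads the buckle bracket as deletion--contraction on the dual (your $H_\alpha$ is exactly that ribbon-graph dual, with one edge per perfect-matching edge), concludes that $\llbracket\Gamma_\alpha\rrbracket_n$ is its chromatic polynomial, and cites \cite{BM-Color} for the count of harmonic colorings. The obstacle you flag is settled as the paper settles it, via Definition 6.14 of \cite{BM-Color}: the $1$-skeleton of $\overline\Gamma_\alpha$ is $G$ itself, so the cycle edges of the blowup are only corners of faces, and faces meet only across perfect-matching edges, each of which carries exactly one buckle.
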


\begin{proof}
Notice that $\Gamma_\alpha$ may be regarded as a ribbon graph in its own right, where $G$ corresponds to the $1$-skeleton of the surface and the circles of the state correspond to the boundaries of faces (see Definition 6.14 in \cite{BM-Color}). Taking the ribbon graph dual, the buckle relation of the BM polynomial corresponds to a deletion-contraction relation.  Thus, the BM polynomial of the state graph is the chromatic polynomial of the dual graph.  Since vertex colorings of the dual correspond to face colorings, and $\dim \widehat{\mathcal{CH}}_n(\Gamma_\alpha)$ is a count of the number of face colorings of the ribbon graph (thought of without the buckles), the result follows.
\end{proof}

\begin{remark}
\Cref{lem:BMState} extends to the case of a perfect matching graph $\Gamma_M$ that is not necessarily a blowup.  We formulate the lemma here for the blowup graph to ensure that the circles in the state correspond directly to the faces of the graph and thus, harmonic colorings correspond to proper face colorings of the associated surface.  In \cite{BKM-TFC}, we provide an interpretation of harmonic colorings for the case of a perfect matching ribbon graph.
\end{remark}

\Cref{lem:BMState} highlights a key distinction between the Penrose polynomial and the BM polynomial. Specifically, when computing the BM polynomial of a perfect matching ribbon graph with certain edges decorated by buckles, the result may differ from the Penrose polynomial. We first illustrate this with an example.

\begin{example}\label{ex:twistedDoubleTheta}
For the perfect matching ribbon graph in~\Cref{fig:twistedDoubleTheta}, if we ignore the buckle, and treat the crossing as an ordinary virtual crossing, the Penrose polynomial is $n^2 - n$.
\begin{figure}[H]
  \centering
  \includegraphics[scale=0.5]{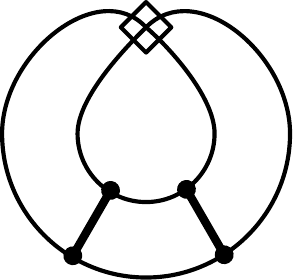}
  \caption{A double theta graph with a twist.}
  \label{fig:twistedDoubleTheta}
\end{figure}
\noindent However, examining the four state graphs in this calculation, one observes that every proper coloring of each state graph must assign the same color to the two intersecting arcs at the top: the buckle is always on a single circle in each state. By~\Cref{lem:BMState}, if we include the buckle in our computation of the BM polynomial, we find that BM polynomial is zero.

If we examine the eight states for the $3$-prism in~\Cref{fig:P3}, we find that four of them form a subcube corresponding to the states of the ribbon graph in \Cref{fig:twistedDoubleTheta}.
\begin{figure}[H]
  \centering
  \includegraphics[scale=0.5]{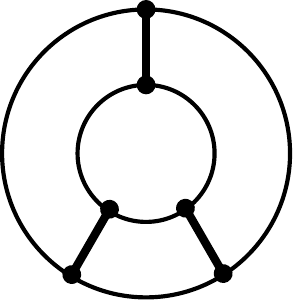}
  \caption{The $3$-prism, or the blowup of a $\theta$-graph.}
  \label{fig:P3}
\end{figure}
\noindent In this case, since the BM polynomial is $0$, none of the four states corresponding to \Cref{fig:twistedDoubleTheta} contribute any colorings to the Penrose polynomial of the $3$-prism, and the proper colorings are only supported on states that have a zero smoothing at the top-most perfect matching edge.  

\end{example}

In general, \Cref{lem:BMState} and Proposition~6.15 of~\cite{BM-Color} imply that the BM polynomial is the signed sum of the proper $n$-face colorings of all state graphs. Throughout the remainder of this paper, we work with state graphs of the BM polynomial. This necessitates the use of ribbon graphs whose virtual crossings may have buckles, as illustrated  in \Cref{fig:twistedDoubleTheta}. The analysis above leads to the following theorem.

\begin{theorem}\label{thm:EAnalog}
Let $\Gamma_M$ be a perfect matching graph for the pair $(G,M)$, possibly including interactions between edges of $E\setminus M$ marked with a buckle. Let $\Gamma_\alpha$ be the state graph for the state $\alpha \in \{0,1\}^{|M|}$ in the hypercube of states for $\Gamma_M$. Then, for $n$ a positive integer,
$$ \llbracket \Gamma_M  \rrbracket_{\! n} = \sum_{\alpha \in \{0,1\}^{|M|}} (-1)^{|\alpha|}\llbracket \Gamma_\alpha\rrbracket_{\! n}.$$

\end{theorem}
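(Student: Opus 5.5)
We prove \Cref{thm:EAnalog} by expanding the BM bracket in two stages: first the perfect matching edges, then the buckles. The key observation is that the two relations in \Cref{defn:BMBracket} are linear and act on disjoint local pieces of the diagram. The first relation acts at perfect matching edges. The second acts only at buckles, whether these were created at smoothing sites or were already present on interactions between edges of $E\setminus M$. Because the pieces are disjoint, the total expansion does not depend on the order in which the relations are applied. We may therefore apply the first relation to every edge $e_1,\dots,e_\ell$ of $M$ before touching any buckle.

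\textbf{Step 1: expand all matching edges.} Induct on $\ell=|M|$. Expanding $e_1$ gives two diagrams. In the first, the edge is replaced by its $0$-smoothing decorated with a buckle (the picture $\OneBuckleII$), with coefficient $+1$. In the second, it is replaced by the $1$-smoothing with a buckle at the virtual crossing (the picture $\Buckle$), with coefficient $-1$. Each of these is again a perfect matching graph with one fewer matching edge, now possibly carrying extra buckles. The hypothesis of the theorem explicitly allows such buckles, so the induction hypothesis applies to both. After all $\ell$ edges are expanded, the diagram indexed by $\alpha\in\{0,1\}^\ell$ appears with sign $(-1)^{|\alpha|}$. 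This diagram is a collection of immersed circles decorated with buckles: the original buckles together with one new buckle at each smoothing site. By definition, this is exactly the BM state graph $\Gamma_\alpha$ of the hypercube of states described after \Cref{defn:BMBracket}. Linearity of $\llbracket\cdot\rrbracket_n$ then gives
\[
\llbracket \Gamma_M\rrbracket_n=\sum_{\alpha\in\{0,1\}^{|M|}}(-1)^{|\alpha|}\llbracket \Gamma_\alpha\rrbracket_n .
\]

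\textbf{Step 2: well-definedness.} The main point requiring care is that $\llbracket\cdot\rrbracket_n$ is well defined, i.e.\ independent of the order in which the local relations are applied. Without this, the reordering in Step 1 is not justified. We verify it as follows. Expanding a buckle, by the second relation, either deletes it or fuses the two arcs it joins. Fusing is an identification of circles, which is commutative and associative. Consequently the complete expansion of any state equals
\[
\sum_{S}(-1)^{|S|}\,n^{c(S)},
\]
where $S$ runs over subsets of the buckles and $c(S)$ is the number of circle classes after fusing along the buckles in $S$. This is the subgraph expansion of the chromatic polynomial, and it is order-independent. The independence from the order of expanding matching edges follows the same pattern: each expansion is a local two-term substitution at a disjoint site, so the full expansion is a sum over all choices at all sites and does not depend on the order.

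\textbf{Conclusion.} Combining the two steps gives the stated identity. As a consistency check, \Cref{lem:BMState} identifies each term $\llbracket\Gamma_\alpha\rrbracket_n$ with a count of proper colorings, namely the chromatic polynomial of the graph whose vertices are the circles of $\Gamma_\alpha$ and whose edges are its buckles. The restriction to positive integers $n$ is needed only for this coloring interpretation; the identity itself holds polynomially in $n$.
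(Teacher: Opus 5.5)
Your proposal is correct and follows the paper's route: expanding the first bracket relation at each of the $|M|$ matching edges produces the hypercube of BM states with sign $(-1)^{|\alpha|}$, which is exactly how the paper arrives at the identity. Your Step 2 goes a bit further than the paper. It proves order-independence via the subset expansion $\sum_S(-1)^{|S|}n^{c(S)}$, a well-definedness point the paper leaves implicit; the paper relies instead on the coloring interpretation from \Cref{lem:BMState} and Proposition~6.15 of \cite{BM-Color}.
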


Moreover, in proving~\Cref{lem:BMState}, we observed that the effect of a buckle on a pair of interacting arcs is to ensure that the BM polynomial counts only colorings where the two arcs at a buckle are assigned different colors. This yields the following:

\begin{corollary}\label{thm:ContrastPenrose}
Let $\Gamma_M$ be a perfect matching graph for the pair $(G,M)$, possibly including interactions between edges of $E\setminus M$ marked with a buckle.  Then, for $n$ a positive integer,
\[
\llbracket \Gamma_M \rrbracket_n = \sum_{\alpha} (-1)^{|\alpha|} \# \left\{ \mbox{$n$-colorings of the circles of $\Gamma_\alpha$ with different colors at each buckle} \right\}.
\]
\end{corollary}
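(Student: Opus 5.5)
The plan is to combine \Cref{thm:EAnalog} with a state-by-state identification of $\llbracket \Gamma_\alpha \rrbracket_n$ as a coloring count. By \Cref{thm:EAnalog},
\[
\llbracket \Gamma_M \rrbracket_n = \sum_{\alpha} (-1)^{|\alpha|} \llbracket \Gamma_\alpha \rrbracket_n .
\]
It therefore suffices to show that, for each fixed state $\alpha$, the BM polynomial $\llbracket \Gamma_\alpha \rrbracket_n$ equals the number of assignments of $n$ colors to the circles of $\Gamma_\alpha$ such that the two arcs at each buckle receive different colors.

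To prove this, fix $\alpha$. Form the abstract graph $H_\alpha$ with one vertex for each circle of $\Gamma_\alpha$ and one edge for each buckle. A buckle joining two arcs of the same circle gives a loop, and several buckles between the same two circles give parallel edges. I will show that $\llbracket \Gamma_\alpha \rrbracket_n = \chi_{H_\alpha}(n)$, the chromatic polynomial of $H_\alpha$. The argument is induction on the number $b$ of buckles.
\begin{itemize}
\item If $b=0$, the state is a disjoint union of immersed circles. Its value is $n^{c}$, where $c$ is the number of circles, and this is $\chi$ of the edgeless graph on $c$ vertices.
\item If $b>0$, apply the buckle bracket $\llbracket \Buckle \rrbracket_n = \llbracket \Virtual \rrbracket_n - \llbracket \Node \rrbracket_n$ to one buckle. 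The virtual term removes that buckle and corresponds to the deletion $H_\alpha - e$. The node term fuses the two circles into a single circle, and every other buckle incident to either circle is now incident to the fused circle; this is exactly the contraction $H_\alpha / e$. Hence $\llbracket \Gamma_\alpha \rrbracket_n$ satisfies deletion--contraction, and induction gives $\chi_{H_\alpha}(n)$.
\end{itemize}
Finally, $\chi_{H_\alpha}(n)$ counts proper $n$-vertex-colorings of $H_\alpha$, which are exactly the colorings of circles with different colors at each buckle. A loop forces the count to be $0$, matching the last sentence of \Cref{lem:BMState}.

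The main obstacle is checking that the node operation really behaves as contraction inside the bracket. Fused circles are no longer single immersed loops, so I must confirm three things:
\begin{itemize}
\item a fused collection of arcs still contributes a single factor of $n$;
\item buckles that remain on the fused circle are treated as edges incident to the merged vertex, including when they become loops;
\item the result does not depend on the order in which buckles are resolved.
\end{itemize}
I would handle this by treating fusion formally as an equivalence relation on circles. Each bracket expansion then evaluates to $\sum_{S \subseteq B} (-1)^{|S|} n^{k(S)}$, where $B$ is the set of buckles and $k(S)$ is the number of equivalence classes after fusing along $S$. This is Whitney's subgraph expansion of $\chi_{H_\alpha}$. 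It is manifestly independent of the order of resolution and equals the proper-coloring count by inclusion--exclusion over the monochromatic buckles. Substituting this count into the sum from \Cref{thm:EAnalog} yields the corollary.
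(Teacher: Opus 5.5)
Your proposal is correct and follows the paper's route: \Cref{thm:EAnalog} reduces the statement to a single state, and the buckle bracket acts as deletion--contraction on the graph whose vertices are the circles and whose edges are the buckles. This identifies each $\llbracket \Gamma_\alpha \rrbracket_n$ with that graph's chromatic polynomial, which is the paper's reasoning via \Cref{lem:BMState}; your Whitney-expansion argument adds the order-independence check that the paper leaves implicit.
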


\begin{remark} When $\Gamma_M$ is the blow up of a ribbon graph $\Gamma$ with $M=E$, as in \Cref{lem:BMState}, the $n$-colorings of the circles of $\Gamma_\alpha$ correspond to proper $n$-face colorings of the state graph $\Gamma_\alpha$.
\end{remark}

\medskip

\Cref{thm:ContrastPenrose} reveals the true utility of the BM polynomial: it can be viewed as a relative version of the Penrose polynomial. To make this precise, we introduce the notion of a capped configuration in the next section.

\section{Caps, configurations, and a relative bracket}\label{sec:caps}
In Section 4 of \cite{BM-Reduce}, we described the theory that enables the reduction of coloring problems for configurations to the study of a finite set of ``caps," which can be thought of as a ``basis" for computing the BM polynomial, and hence, the Penrose polynomial.  We adopt the same approach here.  In this subsection, we recall the basics of caps and configurations through an example and refer the reader to \cite{BM-Reduce} for a detailed treatment.  

\begin{figure}[H]
\includegraphics[scale=0.3]{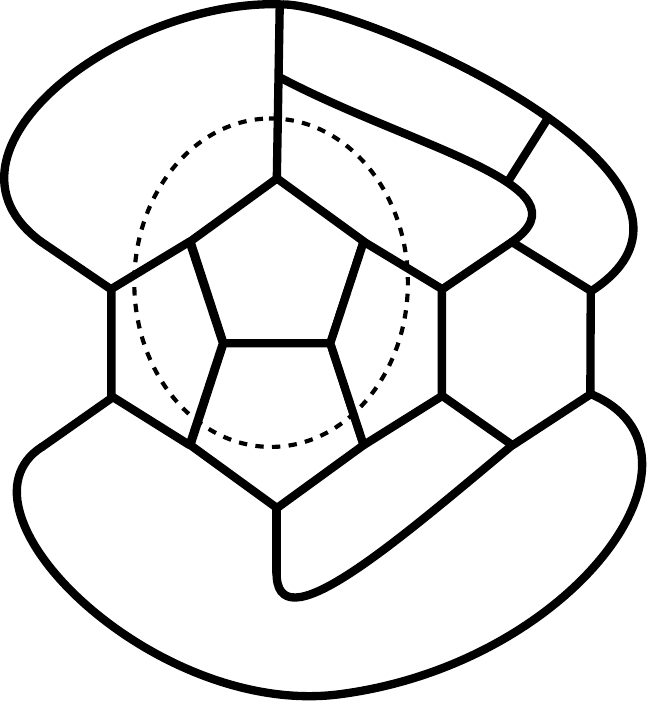}
\caption{An example of a graph that contains a pentagonal face.}\label{fig:capExample}
\end{figure}

Notice that the graph in \Cref{fig:capExample} contains a pentagonal face (circled by a dotted line).  Our plan is to separate the the ribbon graph into two parts:  one inside the dotted circle, referred to as the \emph{configuration} and one outside the dotted circle, which can be captured by a set of caps.  The edges that meet the boundary of the configuration are called \emph{spokes}.  Throughout, we assume that the spokes outside the dotted circle are unique.

Consider a state graph $\Gamma_\alpha$ such as the one shown on the left of \Cref{fig:capExample1}.  The pentagonal face can be excised as shown, with any circles outside the disk removed (see \Cref{fig:capExample1}).  
\begin{figure}[h]
\includegraphics[scale=0.3]{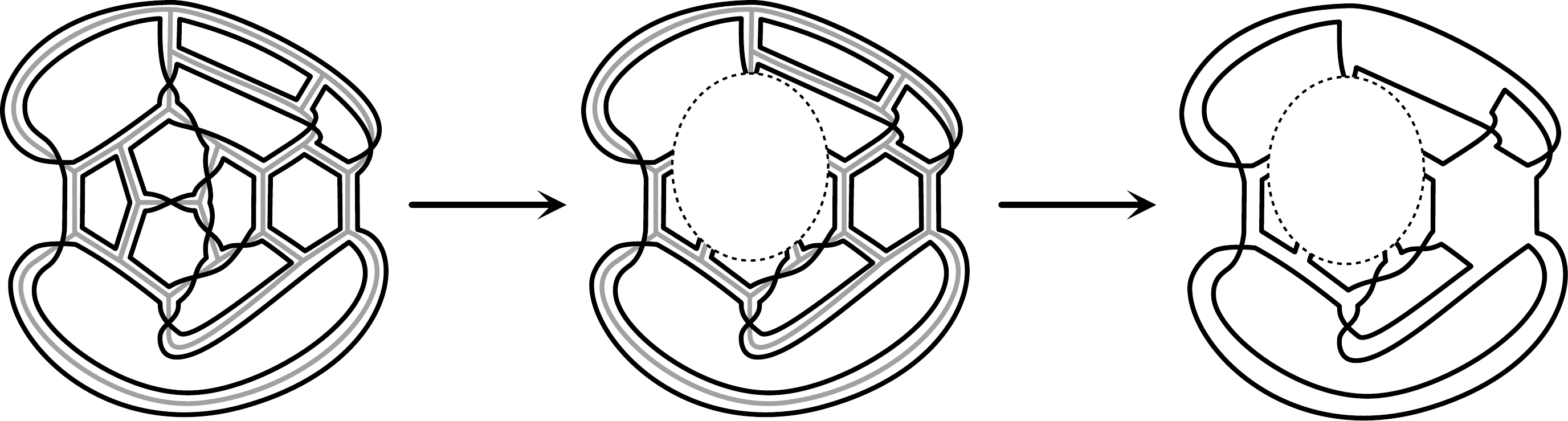}
\caption{Excising a pentagonal face and creating a cap.}\label{fig:capExample1}
\end{figure}
We then simplify the diagram by marking one interaction site with a buckle for each pair of interacting arcs, excluding any arcs that interact along a spoke (these are addressed when considering smoothings of the configuration). The result is a diagram of a \emph{cap} (see \Cref{fig:capExample2}).  
\begin{figure}[h]
\includegraphics[scale=0.3]{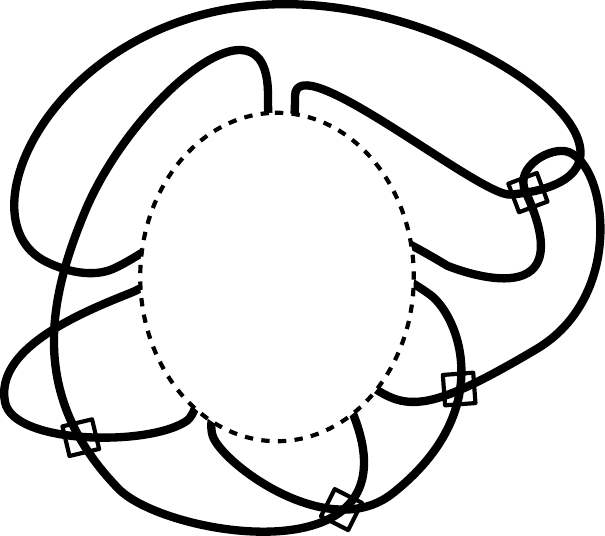}
\caption{A cap $C$.}\label{fig:capExample2}
\end{figure}
Inserting a configuration, such as a pentagon $P$, into a cap, denoted $C$, produces a perfect matching graph decorated by interaction sites in the cap. We denote this by $C\#P$ and refer to as a \emph{capped configuration} (see \Cref{fig:cappedConfig}).

\begin{figure}[h]
\includegraphics[scale=0.3]{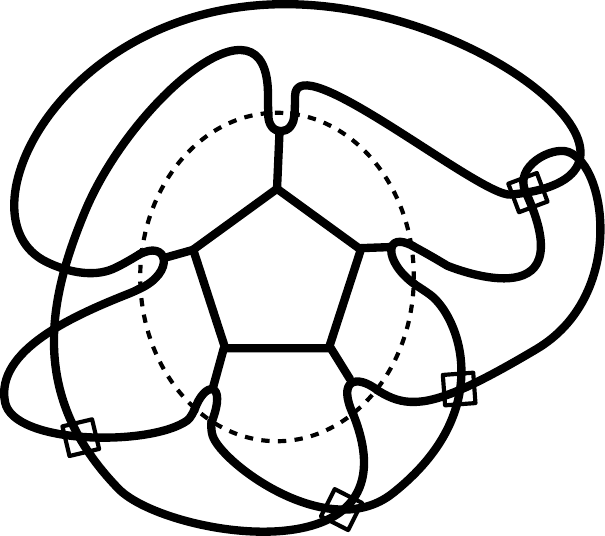}
\caption{A capped configuration $C\#P$.}\label{fig:cappedConfig}
\end{figure}

For a given number of spokes, the set of all possible caps is clearly finite: for $n$ spokes, we consider all possible ways of connecting them with $n$ arcs and then assign buckles to all possible interactions between the arcs.

Our approach is to take each cap, insert various configurations, compute the polynomials for each capped configuration, and identify relations among the polynomials that hold for every possible cap. With the Penrose polynomial, this approach is ineffective, as the Penrose recursion does not account for interactions between the cap arcs. This is where the BM polynomial has an advantage. The buckle relation of the BM polynomial enables computation of the polynomial of a capped configuration, accounting for all relevant interactions between the cap arcs. To demonstrate why our list of caps is sufficient to cover all possible exteriors, we first present a lemma and a simple example.

\begin{lemma}
Given circle, contained in the cap, that interacts with a single cap arc, one may remove the circle, at the expense of multiplying the BM polynomial by $(n-1)$.

\begin{align*}
\bigg \llbracket \hspace{.1cm} \raisebox{-0.25\height}{\includegraphics[scale=.6]{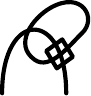} }\hspace{.1cm} \bigg \rrbracket_{\! n} &=  (n-1) \bigg \llbracket   \hspace{.1cm} \raisebox{-0.25\height}{\includegraphics[scale=.6]{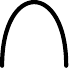} }  \hspace{.1cm} \bigg \rrbracket_{\! n}. 
\end{align*}
\end{lemma}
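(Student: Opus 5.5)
The plan is to argue directly from the coloring interpretation in \Cref{thm:ContrastPenrose}, and then to check the same identity with the buckle bracket of \Cref{defn:BMBracket}. The picture shows a small circle $c$ lying in the cap. It meets exactly one cap arc $a$, at a single buckle; if there are several interaction sites, the earlier lemma reduces them to one buckle. No other arc or circle interacts with $c$. The key point is that the only constraint involving $c$ in any coloring count is that $c$ gets a color different from $a$.

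First, the coloring argument. Fix a state $\alpha$ of the capped configuration. By \Cref{thm:ContrastPenrose}, $\llbracket \Gamma_M\rrbracket_n$ is the signed sum over $\alpha$ of the number of $n$-colorings of the circles of $\Gamma_\alpha$ with different colors at each buckle. The circle $c$ lies entirely in the cap and touches no perfect matching edge, so it is unaffected by the smoothings and appears unchanged in every state. I then build a bijection between the valid colorings of $\Gamma_\alpha$ and pairs consisting of a valid coloring of $\Gamma_\alpha\setminus c$ and a color for $c$ different from the color of the circle containing $a$. There are exactly $n-1$ choices for that color, whatever the rest of the coloring is. Hence each state's count is multiplied by $n-1$. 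The sign $(-1)^{|\alpha|}$ is unchanged, so summing over $\alpha$ gives the identity.

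Second, as an independent check, I would run the bracket computation. Expanding the single buckle by the buckle bracket of \Cref{defn:BMBracket} gives two terms. In the first term, $c$ is disjoint from everything and contributes a factor $n$, since loops evaluate to $n$ and a disjoint circle factors out multiplicatively. In the second term, $c$ is fused with the circle through $a$, leaving exactly the diagram with $c$ removed. This yields $n\llbracket D\rrbracket_n - \llbracket D\rrbracket_n = (n-1)\llbracket D\rrbracket_n$, where $D$ is the diagram on the right.

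The main obstacle is justifying the multiplicativity used in both arguments. A circle with no buckles contributes a factor of $n$ only if disjoint components factor out of the bracket. I would get this from the state-sum description in \Cref{thm:EAnalog} and \Cref{lem:BMState}: there, the count for a free circle is independent of the rest of the coloring.
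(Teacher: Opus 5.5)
Your proposal is correct. Your second argument is exactly the paper's proof. Expanding the single buckle between $c$ and $a$ by the buckle bracket of \Cref{defn:BMBracket} gives two terms. The virtual term is $n\llbracket D\rrbracket_n$, since $c$ becomes a free loop. The node term is $\llbracket D\rrbracket_n$, since fusing a loop with no other interactions into the circle through $a$ does not change the circle count.

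Your primary argument, via \Cref{thm:ContrastPenrose}, is a genuinely different and also valid route. It explains the factor $n-1$ conceptually, state by state: it is the number of colors left for $c$ once the circle through $a$ is colored, and the sign $(-1)^{|\alpha|}$ is untouched. It has two costs. First, \Cref{thm:ContrastPenrose} is only stated for positive integers $n$. You therefore need the easy extra remark that both sides are polynomials in $n$ agreeing at infinitely many points, hence agree identically. Second, that corollary is itself a consequence of the buckle relation. So the bracket computation is the more primitive, self-contained proof, and it works formally in $n$.

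Finally, the ``main obstacle'' you flag is not really one. The bracket is evaluated on terminal states, each a collection of loops valued $n$ apiece. A loop carrying no buckles therefore contributes a factor $n$ to every terminal state of the first term. You do not need \Cref{thm:EAnalog} or \Cref{lem:BMState} for this.
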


\begin{proof}
The proof is the calculation:
\begin{align*}
\bigg \llbracket \hspace{.1cm} \raisebox{-0.25\height}{\includegraphics[scale=.6]{SadBuckle.pdf} }\hspace{.1cm} \bigg \rrbracket_{\! n} &=  \bigg \llbracket  \hspace{.1cm} \raisebox{-0.25\height}{\includegraphics[scale=.6]{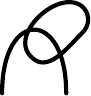} }  \hspace{.1cm} \bigg \rrbracket _{\! n} - \bigg \llbracket  \hspace{.1cm} \raisebox{-0.25\height}{\includegraphics[scale=.6]{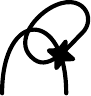} } \hspace{.1cm} \bigg \rrbracket _{\! n}\\
&=  (n-1) \bigg \llbracket  \hspace{.1cm} \raisebox{-0.25\height}{\includegraphics[scale=.6]{StLouis.pdf} }  \hspace{.1cm} \bigg \rrbracket_{\! n}. 
\end{align*}

\end{proof}

\begin{example}
For this example, we focus on two cap arcs that interact with two circles in the outer region, as shown in Figure \ref{fig:simpleCap}.  There may or may not be other cap arcs, and other circles in the outer region, and the graph.
\begin{figure}
\includegraphics[scale=.6]{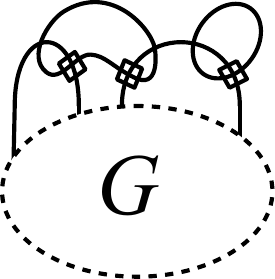} 
\caption{A simple cap.}
\label{fig:simpleCap}
\end{figure}

We then calculate:
\begin{align*}
\bigg \llbracket \hspace{.1cm} \raisebox{-0.25\height}{\includegraphics[scale=.6]{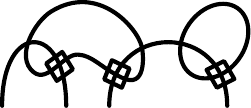} }\hspace{.1cm} \bigg \rrbracket_{\! n} &=  \bigg \llbracket  \hspace{.1cm} \raisebox{-0.25\height}{\includegraphics[scale=.6]{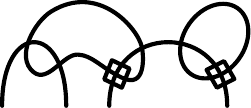} }  \hspace{.1cm} \bigg \rrbracket _{\! n} - \bigg \llbracket  \hspace{.1cm} \raisebox{-0.25\height}{\includegraphics[scale=.6]{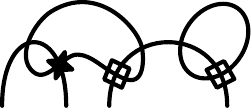} } \hspace{.1cm} \bigg \rrbracket _{\! n}\\
&=  \bigg \llbracket  \hspace{.1cm} \raisebox{-0.25\height}{\includegraphics[scale=.6]{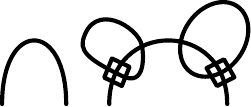} }  \hspace{.1cm} \bigg \rrbracket _{\! n} - \bigg \llbracket  \hspace{.1cm} \raisebox{-0.25\height}{\includegraphics[scale=.6]{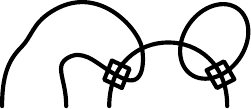} } \hspace{.1cm} \bigg \rrbracket _{\! n}\\
&=  (n-1)^2\bigg \llbracket  \hspace{.1cm} \raisebox{-0.25\height}{\includegraphics[scale=.6]{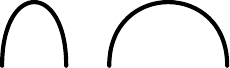} }  \hspace{.1cm} \bigg \rrbracket _{\! n} - (n-1)\bigg \llbracket  \hspace{.1cm} \raisebox{-0.25\height}{\includegraphics[scale=.6]{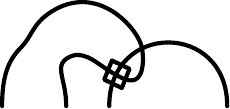} } \hspace{.1cm} \bigg \rrbracket _{\! n}\\
\end{align*}
This calculation illustrates the principle that, for any outer region, the BM polynomial can be expressed in terms of caps consisting only of arcs that begin and end at the configuration, along with their interactions, with coefficients that are polynomials in $n$.  In other words, the finite list of caps serves as a basis.  
\label{ex:simpleCap}
\end{example}

By~\Cref{thm:EAnalog}, the BM polynomial is a signed sum of the BM polynomials of all state graphs. For each state graph, we can express its BM polynomial in terms of the BM polynomials of capped states, as demonstrated in~\Cref{ex:simpleCap}. By appropriately collecting the capped states, we can express the BM polynomial in terms of the polynomials of capped configurations, as summarized in the following theorem.

\begin{theorem}\label{thm:polyFromCaps}
Given a ribbon graph $\Gamma$ containing a configuration $C$ with $k$ spokes.  Let $\mathcal{NP}_k$ be the set of all caps (not necessarily planar) on $k$ spokes.  Then the BM polynomial of $\Gamma$ can be written in terms of capped configurations.  That is,
$$\llbracket \Gamma \rrbracket_n = \sum_{C_i \in \mathcal{NP}_k} f_i (n) \llbracket C_i \# C \rrbracket_n$$
where $f_i(n)$ is a polynomial in $n$, and $\llbracket C_i \# C \rrbracket_n$ is the BM polynomial of the capped configuration.
\end{theorem}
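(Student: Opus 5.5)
The plan is to combine the state-sum formula with a local reduction performed outside the configuration. By \Cref{thm:BracketRelation}, $\llbracket \Gamma \rrbracket_n = \llbracket \Gamma^\flat_E\rrbracket_n$. We order the perfect matching edges so that those lying in the configuration $C$ come last, and those lying outside $C$ (including the spokes' exterior parts) come first. Writing $\alpha=(\beta,\gamma)$, with $\beta$ recording the smoothings outside $C$ and $\gamma$ those inside, \Cref{thm:EAnalog} gives
\[
\llbracket \Gamma \rrbracket_n=\sum_{\beta}(-1)^{|\beta|}\sum_{\gamma}(-1)^{|\gamma|}\llbracket \Gamma_{(\beta,\gamma)}\rrbracket_n .
\]

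\textbf{Step 1: reduce a fixed exterior to caps.} Fix $\beta$. The partial state outside the dotted circle consists of two kinds of pieces: closed circles lying entirely in the exterior, and arcs whose endpoints lie on the $k$ spokes. Crossings between these pieces occur at the smoothing sites of $\beta$, and these are decorated with buckles. We eliminate the exterior closed circles one at a time, by induction on their number.
\begin{itemize}
\item First, we use the lemma reducing multiple buckles between a pair of arcs to a single buckle.
\item Next, we apply the buckle bracket $\llbracket\Buckle\rrbracket=\llbracket\Virtual\rrbracket-\llbracket\Node\rrbracket$ to every buckle joining an exterior circle to another piece. This is a deletion--contraction: each term either removes the interaction or fuses the circle into the other piece.
\item After the buckles are exhausted, a closed circle either is free and contributes a factor $n$, or interacts with a single arc and contributes $(n-1)$ by the preceding lemma.
\item A fused circle merged into an arc is absorbed, and the interactions it carried become buckles on that arc.
\end{itemize}
This is exactly the computation of \Cref{ex:simpleCap}, and it is equivalently the chromatic deletion--contraction of \Cref{lem:BMState}/\Cref{thm:ContrastPenrose}. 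The process terminates because each step reduces (number of exterior circles, number of buckles) lexicographically. Each terminal diagram consists of $k$-spoke arcs carrying buckles among them, that is, a cap $C_i\in\mathcal{NP}_k$, multiplied by an integer polynomial in $n$.

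\textbf{Step 2: check that the reduction commutes with the interior.} The key point is that the reduction in Step 1 uses only local moves outside the dotted circle. It therefore produces, for every interior smoothing $\gamma$ simultaneously, the same expansion
\[
\llbracket \Gamma_{(\beta,\gamma)}\rrbracket_n=\sum_i g_{i,\beta}(n)\,\llbracket (C_i\#C)_\gamma\rrbracket_n .
\]
Arcs interacting along a spoke are excluded from buckling, as in the definition of caps, so this interaction is left to the $\gamma$ smoothings. I expect this step to be the main obstacle. The difficulty is that an exterior arc's identity as a circle depends on $\gamma$: two arcs may close into one circle for some $\gamma$ and not others. The justification I would try first is to phrase everything via \Cref{thm:ContrastPenrose}, as a count of colorings of circle pieces with distinct colors at buckles. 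Deletion--contraction on a buckle between two exterior pieces is valid regardless of how those pieces are later joined, since it only imposes or removes a color-inequality or equality constraint. Contracting two arcs that later become the same circle is harmless, because equating a piece with itself imposes no constraint. Any buckle on a single circle yields zero on both sides.

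\textbf{Step 3: reassemble.} Summing over $\gamma$ with signs gives, by \Cref{thm:EAnalog} applied to $C_i\#C$, the term $\llbracket C_i\#C\rrbracket_n$. Summing over $\beta$ with signs then yields the claimed identity with $f_i(n)=\sum_\beta(-1)^{|\beta|}g_{i,\beta}(n)$, which is a polynomial in $n$. Finiteness of $\mathcal{NP}_k$ ensures the sum over caps is finite.
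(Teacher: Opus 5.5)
Your proposal is correct and is essentially the paper's argument. You expand by the state sum of \Cref{thm:EAnalog}, reduce each exterior partial state to caps by buckle deletion--contraction as in \Cref{ex:simpleCap}, and collect the capped states; your $(\beta,\gamma)$ split and Step~2 make explicit the uniformity in the interior smoothings that the paper leaves to ``appropriately collecting the capped states.'' One small correction: the spoke matching edges must be counted in $\gamma$, not $\beta$, which is what you in fact use later.
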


Combining \Cref{thm:polyFromCaps} with \Cref{thm:ContrastPenrose} along with Theorem E and 7.2 of \cite{BM-Color} we obtain the following corollary:

\begin{corollary}\label{thm:capRelation}
Any relation between the BM polynomials of a set of configurations that holds for each cap individually, holds for the Penrose polynomial in general. Specifically, let $C_1, \ldots, C_\ell$ be configurations with $k$ spokes each.  For polynomials $a_i(n)$, if 
$$\sum_{i=1}^\ell a_i(n) \llbracket C_i \# Cap \rrbracket_n =0$$
holds for all caps $Cap \in \mathcal{NP}_k$, then for any ribbon graph $\Gamma$ with a $k$-spoke configuration removed, such that the spokes are unique in $\Gamma$,
$$\sum_{i=1}^\ell a_i(n) P(\Gamma\#C_i) = 0.$$
\end{corollary}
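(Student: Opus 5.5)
The plan is to combine \Cref{thm:polyFromCaps}, which expands the BM polynomial of a ribbon graph in the cap basis, with \Cref{thm:BracketRelation}, which identifies the BM and Penrose polynomials on blowups. The key point is that the coefficients $f_j(n)$ produced by \Cref{thm:polyFromCaps} depend only on the exterior of the configuration, not on which configuration is inserted. Here I read $P(\Gamma\#C_i)$ as the Penrose polynomial $[\Gamma\#C_i]_n$ of the ribbon graph obtained by inserting $C_i$.

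\emph{Step 1: a uniform expansion.} Fix the exterior $\Gamma$ with its $k$ unique spokes. Following the construction behind \Cref{thm:polyFromCaps}, expand along the perfect matching edges lying outside the configuration, using \Cref{thm:EAnalog} and \Cref{thm:ContrastPenrose}. Every exterior state then reduces, through the buckle relation and circle-removal moves as in \Cref{ex:simpleCap}, to a combination of caps in $\mathcal{NP}_k$ with polynomial coefficients.

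I would check two things. First, the reduction uses only the buckle bracket and the lemma removing a circle that interacts with a single cap arc; both moves are local to the exterior. Second, interactions along spokes are deliberately excluded from caps and handled on the configuration side, and the hypothesis that the spokes are unique guarantees the exterior arcs meet the configuration only through the $k$ spoke endpoints. Together these give
\[
\llbracket \Gamma\# C_i\rrbracket_n=\sum_{Cap_j\in\mathcal{NP}_k} f_j(n)\,\llbracket Cap_j\# C_i\rrbracket_n ,
\]
with the same $f_j(n)$ for every $i$.

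\emph{Step 2: apply the hypothesis.} Multiply by $a_i(n)$, sum over $i$, and interchange the two finite sums:
\[
\sum_{i} a_i(n)\llbracket\Gamma\#C_i\rrbracket_n=\sum_j f_j(n)\sum_i a_i(n)\llbracket Cap_j\#C_i\rrbracket_n=0 ,
\]
since each inner sum vanishes by hypothesis.

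\emph{Step 3: pass to the Penrose polynomial.} By \Cref{thm:BracketRelation}, the BM and Penrose polynomials agree on the blowup, which carries no buckles. Hence $\llbracket \Gamma\#C_i\rrbracket_n=[\Gamma\#C_i]_n$, which gives the conclusion. The coloring interpretation (Theorem E and 7.2 of \cite{BM-Color}, via \Cref{thm:ContrastPenrose}) is what justifies this at positive integers $n$. Since both sides are polynomials, the identity then holds for all $n$.

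The main obstacle is Step 1: showing the coefficients are genuinely independent of $C_i$. A capped state is really a pair (exterior state, configuration state), so I must verify that the exterior-to-cap reduction commutes with the sum over configuration states. This means the signs $(-1)^{|\alpha|}$ must factor as a product of an exterior part and a configuration part, and the circles crossing spokes must be treated consistently. I would first try ordering $M$ so that exterior edges come first, factor the hypercube as a product, and apply the reduction fiberwise over each configuration state.
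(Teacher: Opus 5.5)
Your proposal is correct and takes essentially the paper's route. You expand each $\llbracket \Gamma\#C_i\rrbracket_n$ in the cap basis via \Cref{thm:polyFromCaps}, whose coefficients depend only on the exterior, and cancel each cap term using the hypothesis. You then identify the BM and Penrose polynomials on the buckle-free blowup. The differences are minor: you invoke \Cref{thm:BracketRelation} (really its buckle-free companion theorem, since the quadrilateral configurations are not all trivalent), where the paper cites \Cref{thm:ContrastPenrose} together with Theorems E and 7.2 of \cite{BM-Color}. You also make explicit, via the product structure of the hypercube, that the $f_j(n)$ do not depend on the configuration, which the paper leaves implicit.
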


\begin{remark}
The corollary above also applies for relations at specific values of $n$ as in \Cref{thm:4gonRelation,thm:5gonRelation} below, or could be applied in general for relations that hold for all $n$, such as those in \Cref{thm:polynomialRelations}.
\end{remark}

\subsection{The Pentagon and the Quadrilateral}  As an application of \Cref{thm:capRelation}, we specialize to $n=4$ and give two new relations: one for a quadrilateral, and one for a pentagon.  

\begin{theorem}
\label{thm:4gonRelation}
Let $\Gamma$ be a ribbon graph of an abstract graph $G(V,E)$ that has a quadrilateral face whose emanating edges are all unique in $\Gamma$.  Replacing the quadrilateral with each of the configurations shown on the right, we obtain a family of ribbon graphs whose Penrose polynomials, evaluated at $n=4$, satisfy the following relation:\\
$$\left[ \hspace{.1cm} \Quad \hspace{.1cm} \right]_{\! 4} =  2 \left[ \hspace{.1cm} \HH  \hspace{.1cm} \right]_{\! 4} + 2 \left[ \hspace{.1cm} \PlainI  \hspace{.1cm} \right]_{\! 4} + 4 \left[ \hspace{.1cm} \IITwo  \hspace{.1cm} \right]_{\! 4} + 4 \left[ \hspace{.1cm} \EqualTwo  \hspace{.1cm} \right]_{\! 4} + 4\left[ \hspace{.1cm} \VirtualTwo  \hspace{.1cm} \right]_{\! 4} - 4 \left[ \hspace{.1cm} \QuadBD  \hspace{.1cm} \right]_{\! 4}.$$
\end{theorem}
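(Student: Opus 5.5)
We will deduce the relation from \Cref{thm:capRelation} with $k=4$ and $n=4$. The first step is to convert each of the six configurations into a perfect matching graph with four spokes, so that BM brackets $\llbracket C_i \# Cap \rrbracket_4$ make sense. For the quadrilateral we use its blowup. For the other pieces (the $H$, the $I$, the two parallel-arc pictures, the virtual crossing, and the bold-edged quadrilateral) we use the corresponding partial blowups, keeping the spokes as perfect matching edges. It then suffices to show that
\[
\llbracket Q\#Cap\rrbracket_4 - 2\llbracket H\#Cap\rrbracket_4 - 2\llbracket I\#Cap\rrbracket_4 - 4\llbracket \|\#Cap\rrbracket_4 - 4\llbracket =\#Cap\rrbracket_4 - 4\llbracket \times\#Cap\rrbracket_4 + 4\llbracket Q_{bd}\#Cap\rrbracket_4 = 0
\]
holds for every cap $Cap\in\mathcal{NP}_4$.

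The second step reduces the cap list to a finite enumeration. A cap on four spokes pairs the four endpoints by two arcs, which gives the three matchings $\{12,34\}$, $\{13,24\}$, $\{14,23\}$. For each matching, the two cap arcs may or may not carry a buckle between them; by the lemma on multiple buckles, one buckle is enough. We also have to track how the cap arcs cross each other, that is, planar versus virtual pairing. The dihedral symmetry of the square permutes these cases. The right-hand side is also symmetric, since $H$ and $I$ are rotations of each other and the two parallel-arc terms are rotations of each other. So only a few orbit representatives need checking: the adjacent pairing and the opposite (crossing) pairing, each with and without a buckle.

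The third step is the computation for each representative. By \Cref{thm:ContrastPenrose}, each capped configuration is evaluated as a signed sum over the smoothings of its internal perfect matching edges. Each smoothing contributes the number of $4$-colorings of the resulting circles in which the two circles at every buckle, whether from the cap or from the blowup, get different colors. After smoothing, the capped quadrilateral has only a few circles: the inner face circle, the circles formed by the cap arcs, and the spoke-region circles. So every term is a small chromatic polynomial of a graph with at most about five vertices. We evaluate it at $n=4$ and check the linear identity.

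The main obstacle is the bookkeeping in this third step. The quadrilateral blowup has $2^8$ states before symmetry, and we must correctly identify which circles get fused and which pairs are forced to differ at buckles, especially for the nonplanar (crossing) cap. To manage this, we will first simplify the quadrilateral side symbolically. We would use the buckle bracket to collapse each blown-up vertex triangle, as in the derivation of \Cref{thm:BracketRelation}, and aim to express $\llbracket Q\#Cap\rrbracket_n$ as a polynomial combination of a small set of basic capped diagrams. The remaining configurations produce the same basic diagrams with their own coefficients. We then match coefficients at $n=4$: for the quadrilateral itself this gives a $4$-dimensional basis check per cap type, and the coefficients $2,2,4,4,4,-4$ should emerge. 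If direct symbolic reduction proves messy, we fall back on explicit enumeration of colorings per cap and verify the six numbers satisfy the identity.
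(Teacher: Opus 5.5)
Your framework matches the paper's. You apply \Cref{thm:capRelation} at $n=4$ and verify the linear identity among BM polynomials of capped configurations, cap by cap. The gap is in your second step, where you misidentify $\mathcal{NP}_4$.

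A cap on $k$ spokes is not a pairing of $k$ spoke ends by $k/2$ arcs. Each spoke is a perfect matching edge whose smoothing site carries two strands. Once the closed outer circles are absorbed into polynomial coefficients (as in \Cref{ex:simpleCap} and \Cref{thm:polyFromCaps}), the outside of a state consists of $k$ arcs joining $2k$ endpoints; this is the paper's ``for $n$ spokes, we consider all possible ways of connecting them with $n$ arcs.'' A buckle may then sit on any pair of these arcs that interact away from the spokes. For the quadrilateral that means four arcs and six pairs, each buckled or not.

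These buckles carry exactly the information the $4$-coloring identity is sensitive to. In the planar all-zero outer picture, they record whether the opposite faces $F_1,F_3$ around the quadrilateral meet elsewhere, whether $F_2,F_4$ do, and similar adjacency data. Your two-arc model (three matchings, with or without a single buckle) cannot encode this. So checking its four orbit representatives does not establish the hypothesis of \Cref{thm:capRelation}, which must hold for every cap.

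The paper's list, generated as in Section~5.1 of \cite{BM-Reduce}, has $60$ caps, planar and nonplanar. For each cap it computes the BM polynomials of all seven capped configurations, then solves the resulting system of $60$ equations in $7$ unknowns by computer. Your dihedral-symmetry reduction is legitimate, since $H$ and $I$ share a coefficient, as do the two parallel-arc pictures. But it only shrinks the $60$ caps to a still substantial set of orbits, each needing its own state-sum computation.

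A secondary point concerns the last configuration on the right-hand side. By the remark following \Cref{thm:5gonRelation}, it is the $4$-valent vertex (the quadrilateral blown down). Its blowup is a bare $4$-cycle with the four spokes as matching edges. If your ``bold-edged quadrilateral'' is anything else, in particular a blown-up quadrilateral, you would be testing a different identity.
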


\medskip

\begin{theorem}
\label{thm:5gonRelation}
Let $\Gamma$ be a ribbon graph of an abstract graph $G(V,E)$ that contains a pentagonal face whose emanating edges are all unique in $\Gamma$.  Replacing the pentagon with each of the configurations shown on the right, we obtain a family of ribbon graphs whose Penrose polynomials, evaluated at $n=4$, satisfy the following relation:\\
$$\left[ \hspace{.1cm} \Pentagon \hspace{.1cm} \right]_{\! 4} =  \frac{3}{2}\left[ \hspace{.1cm} \DRZero  \hspace{.1cm} \right]_{\! 4} - 4\left[ \hspace{.1cm} \SMZero  \hspace{.1cm} \right]_{\! 4} - \left[ \hspace{.1cm} \SMOne  \hspace{.1cm} \right]_{\! 4} + 2 \left[ \hspace{.1cm} \SMTwo  \hspace{.1cm} \right]_{\! 4} + 2 \left[ \hspace{.1cm} \SMThree  \hspace{.1cm} \right]_{\! 4} - \left[ \hspace{.1cm} \SMFour  \hspace{.1cm} \right]_{\! 4}.$$
\end{theorem}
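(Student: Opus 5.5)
The plan is to reduce \Cref{thm:5gonRelation} to a finite verification via \Cref{thm:capRelation}. The pentagon and each configuration on the right-hand side have the same five spokes, so it suffices to prove
\[
\llbracket P\# Cap\rrbracket_4 - \tfrac{3}{2}\llbracket D_0\# Cap\rrbracket_4 + 4\llbracket S_0\# Cap\rrbracket_4 + \llbracket S_1\# Cap\rrbracket_4 - 2\llbracket S_2\# Cap\rrbracket_4 - 2\llbracket S_3\# Cap\rrbracket_4 + \llbracket S_4\# Cap\rrbracket_4 = 0
\]
for every cap $Cap\in\mathcal{NP}_5$. Here $P$ is the pentagon and $D_0, S_0,\dots,S_4$ are the configurations on the right, in the order displayed. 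Since the spokes are assumed unique in $\Gamma$, the hypothesis of \Cref{thm:capRelation} is met, and the relation for the Penrose polynomial follows. We use $\llbracket\Gamma\rrbracket_n=[\Gamma]_n$ from \Cref{thm:BracketRelation}, applied to the blowup.

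First we enumerate $\mathcal{NP}_5$ up to the symmetries preserving the configurations. Five spokes are joined by cap arcs through the outer region, and arcs interacting pairwise carry buckles in all possible combinations. Two reductions make the list manageable. By the lemma on multiple buckles, each pair of arcs carries at most one buckle. By \Cref{lem:BMState}, a buckle on a single circle kills the term. Rotational and reflective symmetry of the pentagon cuts the list further, provided each right-hand configuration is checked to transform compatibly. Alternatively, one can symmetrize and check the relation only on symmetry orbits of caps, with the understanding that the rotated configurations sum appropriately.

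For each cap we evaluate every capped configuration with \Cref{thm:ContrastPenrose} at $n=4$. We expand the perfect matching edges into states and, for each state, count the $4$-colorings of its circles that differ at each buckle. This is a chromatic-polynomial computation on a small graph whose vertices are circles and whose edges are buckles, and it uses the buckle relations in the cap. Equivalently, each state contributes $P_{H_\alpha}(4)$, where $H_\alpha$ is the circle/buckle interaction graph. The pentagon has five matching edges, giving $32$ states per cap, and the smaller configurations have fewer. We tabulate the values and check that the linear combination vanishes, or more simply solve the linear system for the coefficients and confirm that they equal $\tfrac32,-4,-1,2,2,-1$.

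The main obstacle is the size and bookkeeping of this case analysis. There are many non-planar caps on five spokes with many buckle patterns, and the state sums must be computed without error. I would organize the work by grouping caps according to their underlying arc matching, and treat the buckle decorations through deletion–contraction on the buckles themselves. Each buckle relation is linear, so a relation that holds for the cap with no buckles and for all caps with fused circles (nodes) carries over to all decorations. This reduces the verification to undecorated caps with possibly fused arcs, a much smaller set, and in practice it would be carried out by computer.
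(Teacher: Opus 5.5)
Your plan matches the paper's proof: reduce via \Cref{thm:capRelation} to the per-cap identity of BM polynomials at $n=4$, then verify it over all caps on five spokes. The paper does this by computer for its 1024 caps, solving the linear system for the seven coefficients. Drop the ``symmetrize and check only on orbit representatives'' shortcut. The right-hand side is not invariant under rotating the spokes, so an averaged relation checked on representatives does not imply the stated one unless you check every rotated copy of the relation on each representative, which amounts to checking all caps anyway.
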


\medskip

\remark Note that the relation of \Cref{thm:4gonRelation} utilizes a nonplanar configuration, the one with the virtual crossing, and one with a $4$-valent vertex.  The pentagon relation of \Cref{thm:5gonRelation} contains a configuration with a quadrilateral.  One could of course, substitute the relation of \Cref{thm:4gonRelation} to obtain a different pentagon relation.  However, the pentagon relation above has the advantage of using only planar trivalent configurations. For example, if a plane graph is a minimal counterexample to the four color theorem, then replacing a pentagon in it with any of the other configurations in the formula will produce a plane graph with $4$-face colorings. 

The proofs of these theorems are accomplished using the technique of  verifying each relation for each cap individually using the BM polynomial, as described in \Cref{thm:capRelation}.  The approach, carried out in Mathematica code attached as auxiliary files to the arXiv version of this paper, is outlined as follows:
\begin{enumerate}
\item First, generate a list of caps:  1024 caps for the pentagon and 60 for the quadrilateral, using the steps described in Section 5.1 of \cite{BM-Reduce}.  This list includes both planar and nonplanar caps.
\item Insert each configuration into the cap and compute the BM polynomial of each capped configuration.  
\item We then set up a system of 1024 equations with 7 unknowns in the case of the pentagon, and a system of 60 equations with 7 unknowns for the quadrilateral, and then solve. These systems of equations have solutions.
\item Since the relation holds for the BM polynomials of those configurations, \Cref{thm:capRelation} implies that it holds for the Penrose polynomials as well, and the proof is complete.
\end{enumerate}

As described in the proof above, writing 1024 equations with 7 unknowns and expecting solutions would appear naive.  In truth, our work in \cite{BM-Reduce} gave us a hunch that there was a solution.  Still, we had to write several programs and run many calculations to arrive at the relations above.  For example, we have alternate programs and methods that prove these relations, some of which we did by hand.  The auxiliary files to the arXiv version of this paper represent the most direct and simplest approach.


\section{Concluding remarks}\label{sec:Relations}
We end with a family of relations on the polynomial that hold for all $n>1$.  Unless otherwise stated, the graphs do not need to be trivalent or planar.  

\begin{theorem}
\label{thm:polynomialRelations}
Let $\Gamma$ be a ribbon graph of an abstract graph $G(V,E)$ in which one of the configurations on the left below appears, and the emanating edges are distinct in $\Gamma$.  Then the configuration can be replaced with the corresponding one on the right and the Penrose polynomials of the two ribbon graphs satisfy the following relations for all $n$:
\begin{align}
\label{eqn:DotRel} \left[ \hspace{.1cm} \IDot \hspace{.1cm} \right]_{\! n} &=  2 \left[ \hspace{.1cm} \Vertical  \hspace{.1cm} \right]_{\! n},\\
\label{eqn:FourVRel} \left[ \hspace{.1cm} \FourVRA \hspace{.1cm} \right]_{\! n} &=  2 (n-1) \left[ \hspace{.1cm} \FourVRB  \hspace{.1cm} \right]_{\! n},\\
\label{eqn:BubbleRel}\left[ \hspace{.1cm} \Bubble \hspace{.1cm} \right]_{\! n} &=  2(n-2) \left[ \hspace{.2cm} \Vertical \hspace{.2cm} \right]_{\! n},\\
\label{eqn:TwistRel} \left[ \hspace{.1cm} \TrivalentVertTwist \hspace{.1cm} \right]_{\! n} &= - \left[ \hspace{.2cm} \TrivalentVert \hspace{.2cm} \right]_{\! n},\\
\label{eqn:TriangleRel} \left[ \hspace{.1cm} \BUTriangle \hspace{.1cm} \right]_{\! n} &=  (n-2) \left[ \hspace{.2cm} \TrivalentVert \hspace{.2cm} \right]_{\! n}.
\end{align}
\end{theorem}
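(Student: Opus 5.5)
The plan is to prove all five relations by a \emph{local} computation, using the fact that the bracket of \Cref{defn:penrose_poly} is a local skein relation. Let $\Gamma$ contain one of the configurations on the left. After blowing up, each spoke of the configuration is a perfect matching edge. In every state it contributes two strands crossing the boundary of a disk $D$ that contains the configuration. A configuration with $k$ spokes therefore meets $\partial D$ in $2k$ points: $4$ for \eqref{eqn:DotRel}, \eqref{eqn:BubbleRel}, and $6$ or $8$ for the vertex-type relations.

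First expand every perfect matching edge lying strictly inside $D$ by the bracket. Each resulting local state consists of closed loops inside $D$ together with a pairing $\pi$ of the $2k$ boundary points, realised possibly with virtual crossings. Since loops are evaluated independently, the configuration is equivalent to the formal sum
$$\sum_{\pi} c_\pi(n)\,\pi, \qquad c_\pi(n)=\sum_{\text{local states giving } \pi} (-1)^{\#1\text{-smoothings}}\, n^{\#\text{closed loops}}.$$
Here $\pi$ runs over the pairings of the boundary points. Two configurations with the same coefficient vector $(c_\pi)$, up to the stated scalar, have equal Penrose polynomials for every exterior. This is a stronger conclusion than \Cref{thm:capRelation} requires, and it needs no buckles.

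The key steps, in order, are as follows.
\begin{enumerate}
\item For \eqref{eqn:DotRel}, write down the blowup of the degree-two vertex as a $2$-cycle attached to two matching edges. Compute its $c_\pi$ over the three pairings of $4$ points, and compare with the single matching edge, whose only pairings are $\IIDiag$ (coefficient $1$) and $\XDiag$ (coefficient $-1$). I expect the two internal $2$-cycle arcs to double these coefficients.
\item For \eqref{eqn:BubbleRel}, apply step 1 to both edges of the digon. Evaluating the new closed loop produces the factor $n$, and a cancellation against the twisted term leaves $2(n-2)$.
\item For \eqref{eqn:FourVRel}, compare the blowup of the $4$-valent vertex (a $4$-cycle) with the right-hand configuration in the same way.
\item For \eqref{eqn:TwistRel}, note that adding a half-twist at the vertex swaps $0$- and $1$-smoothings along the affected edge in a fixed manner. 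The resulting bijection of local states preserves the boundary pairings and the loop counts but changes the parity of $|\alpha|$ by one. This gives the overall sign $-1$.
\item For \eqref{eqn:TriangleRel}, expand the three internal matching edges of the blown-up triangle ($2^3$ local states). Group the states by boundary pairing and compare with the three pairings arising from the $3$-cycle of the blown-up vertex.
\end{enumerate}

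I expect step 5 to be the main obstacle. The naive local coefficients for the triangle need not be proportional to those of the trivalent vertex. In that case some pairings with crossings presumably contribute terms that vanish only globally, namely because the two strands they join always lie on the same face. If the direct skein comparison fails, I will fall back on \Cref{thm:capRelation}. The caps on $3$ spokes (six endpoints, with buckle data) form a small finite list. For each cap I will compute $\llbracket \text{Cap}\#\text{triangle}\rrbracket_n$ and $(n-2)\llbracket \text{Cap}\#\text{vertex}\rrbracket_n$ via \Cref{thm:ContrastPenrose}: count $n$-colourings of the state circles with distinct colours at the buckles, and use \Cref{lem:BMState} to discard every state having a buckle on a single circle. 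Matching these finitely many polynomial identities proves \eqref{eqn:TriangleRel}. The same fallback handles \eqref{eqn:BubbleRel} or \eqref{eqn:FourVRel} should their direct comparison leave similar residual terms.
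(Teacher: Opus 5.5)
Your local state-sum route is sound and genuinely different from the paper's. It needs one correction to the set-up, and that correction removes the obstacle you expect in step 5.

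\textbf{The correction.} The smoothings of the spoke edges themselves must be expanded inside $D$, so that $\partial D$ meets the two cycle strands at the far end of each spoke. Expanding only the edges ``strictly inside $D$'' is not enough. This is forced: in \eqref{eqn:DotRel}, \eqref{eqn:FourVRel} and \eqref{eqn:BubbleRel} the right side has one matching edge where the left has two spokes, so the two exteriors coincide only if all these smoothings are interior. Your step 4 already uses the spoke smoothings.

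With this convention each spoke $s$ acts on pairings by $1-\sigma_s$, where $\sigma_s$ swaps the two boundary points of $s$. This operator kills every pairing joining those two points, and $(1-\sigma_s)\sigma_s=-(1-\sigma_s)$. Write $\pi_\parallel$ and $\pi_\times$ for the parallel and crossed pairings of four points.
\begin{itemize}
\item In step 1 the factor $2$ comes from $(1-\sigma_1)(1-\sigma_2)\pi_\parallel=2(\pi_\parallel-\pi_\times)$, not from the $2$-cycle arcs.
\item In step 2 the four digon states give $(n-2)\pi_\parallel$ plus one pairing joining the two top points. The spoke operators kill that pairing, leaving $2(n-2)(\pi_\parallel-\pi_\times)$.
\end{itemize}

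\textbf{Step 5 then goes through directly.} Let $V$ be the pairing of the blown-up vertex. The eight states of the triangle's edges give:
\begin{itemize}
\item $nV$ from the all-zero state, where the inner face closes up into a loop;
\item $-V$ three times, from the states with one edge crossed;
\item $+W_s$ for each spoke $s$, from the state crossing the two triangle edges at the vertex carrying $s$; here $W_s$ joins the two points of $s$;
\item $-\sigma_a\sigma_b\sigma_c V$ from the all-crossed state, which is the twisted vertex.
\end{itemize}
Applying $\prod_s(1-\sigma_s)$ kills each $W_s$ and turns the last term into $+\prod_s(1-\sigma_s)V$. So the triangle's vector is exactly $(n-2)$ times the vertex's, and the cap fallback is unnecessary.

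\textbf{Comparison with the paper.} The paper invokes Theorem E of \cite{BM-Color} to discard states with no proper face colouring. It reads $n-1$, $n-2$, $n-3$ as numbers of free colours. For the triangle, the singly crossed states contribute $0$ there, and the all-zero state contributes $(n-3)$ times the vertex. In your bookkeeping those states contribute $-V$ each and merge with $nV$. The pairings your spoke operators kill are precisely the states in which a spoke's buckle lies on a single circle.

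The paper's argument is shorter given that external theorem. Yours uses only the bracket and proves the relations as identities of local vectors, valid for every exterior. It also shows exactly where distinct spokes are needed: each spoke must contribute its own factor $1-\sigma_s$.
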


\begin{remark}
The requirement in this theorem (and all theorems in this paper) that the emanating edges are distinct in $\Gamma$ is an important condition. The theorems are sometimes false without it. For example, a single vertex with one loop has Penrose polynomial $n(n-1)$, whereas a loop without a vertex is $n$. However, a plane graph with two vertices and two edges between them has Penrose polynomial $2n(n-1)$ as \Cref{eqn:DotRel} specifies. Note that the condition is not needed for \Cref{eqn:TwistRel} or \Cref{eqn:TriangleRel}.
\end{remark}

Before proving \Cref{thm:polynomialRelations}, we should distinguish these relations from the ones given by Penrose (see \Cref{fig:Penrose-Orig-Relations}).  Penrose's relations were only for $n=3$ and only applied to trivalent graphs.  Moreover, while several of these (e.g. \Cref{eqn:BubbleRel,eqn:TwistRel,eqn:TriangleRel}) agree with his at $n=3$, the relations above are distinguished by the fact that they are computed on the blowup.  The agreement at $n=3$ happens because $3$-edge colorings of the blowup of a trivalent graph are in one-to-one correspondence with $3$-edge colorings of the original graph by \Cref{eqn:TriangleRel}.


\begin{proof}
Each relation is proven using Theorem E\footnote{In fact, we could use \Cref{thm:capRelation} to prove them as well, but the configurations are simple enough that it is not necessary.}  of \cite{BM-Color}, which interprets the Penrose polynomial in terms of counts of proper $n$-face colorings of state graphs.  In particular, any state graph which is not properly colorable, does not contribute to the polynomial, and may be ignored.  For \Cref{eqn:DotRel}, the factor of two results from the fact that there are twice as many states for the ribbon graph on the left that support colorings.  For \Cref{eqn:FourVRel}, note that the all-zero smoothing contains an extra circle, compared to the ribbon graph where the loop is removed.  After coloring everything but that circle, there are $n-1$ colors available to obtain a proper $n$-face coloring.  However, any state with a $1$-smoothing on the loop, cannot be properly colored, and does not contribute to the polynomial.  The factor of two results from applying \Cref{eqn:DotRel} to remove the vertex.  A similar argument applies to \Cref{eqn:BubbleRel}:  in this case, of the four relevant state graphs for the configuration on the left, only the all-zero state graph is colorable.  Moreover, the all-zero state contains one extra circle, incident to both of the other components, hence there are $n-2$ colors available after coloring everything but that circle.  
Again, the factor of two results from applying \Cref{eqn:DotRel} to remove the vertex.

For \Cref{eqn:TwistRel} we observe that one can ``flip over" the vertex to undo the twist, which inserts an extra half-twist in each ribbon incident to the vertex for every state graph in the hypercube of resolution.  Each half-twist gives an overall factor of negative one, which yields the result.

Finally, for the triangle in \Cref{eqn:TriangleRel} we note that only two of the smoothings can potentially be colored:  the all-zero and the all-one. The proof is completed by the following calculation.
\begin{align*}
\left[ \hspace{.1cm} \BUTriangle \hspace{.1cm} \right]_{\! n} &=  \Bigg \llbracket \hspace{.1cm} \VertexZero  \hspace{.1cm} \Bigg \rrbracket_{\! n} - \Bigg \llbracket \hspace{.1cm} \VertexOne  \hspace{.1cm} \Bigg \rrbracket_{\! n}\\
&=  (n-3) \left[ \hspace{.1cm} \TrivalentVert  \hspace{.1cm} \right]_{\! n} - \left[ \hspace{.1cm} \TrivalentVertTwist  \hspace{.1cm} \right]_{\! n}\\
&=  (n-3) \left[ \hspace{.1cm} \TrivalentVert  \hspace{.1cm} \right]_{\! n} + \left[ \hspace{.1cm} \TrivalentVert  \hspace{.1cm} \right]_{\! n}\\
&=  (n-2) \left[ \hspace{.1cm} \TrivalentVert  \hspace{.1cm} \right]_{\! n} 
\end{align*}
\end{proof}

Part of the beauty of \Cref{thm:polynomialRelations} is that the relations apply for all $n$.  In fact, future work will use some of these relations.  Note, however, \Cref{thm:4gonRelation,thm:5gonRelation} only apply at $n=4$.  The relations given in those theorems do not continue to hold, even for $n=5$.  Regardless, it seems natural to ask the following:

\question By choosing appropriate polynomial coefficients for each configuration (instead of constants), or by possibly choosing more or other configurations, are there relations like in \Cref{thm:4gonRelation,thm:5gonRelation} that hold for all $n$ for the quadrilateral or the pentagon?

\end{document}